\documentclass[11pt]{amsart}

\usepackage{a4wide,amsmath,amsfonts,amssymb,amsthm,mathrsfs,bbm}
\usepackage[hyperfootnotes=false]{hyperref}
\usepackage[dvipsnames]{xcolor}

\allowdisplaybreaks[2]

\numberwithin{equation}{section}

\newtheorem{theorem}{Theorem}[section]
\newtheorem{lemma}[theorem]{Lemma}

\newtheorem{remark}[theorem]{Remark}

\newtheorem{definition}[theorem]{Definition}
\newtheorem{assumption}[theorem]{Assumption}

\renewcommand{\d}{\mathrm{d}}
\newcommand{\dd}{\,\mathrm{d}}
\newcommand{\R}{\mathbb{R}}
\newcommand{\bR}{\mathbb{R}}
\newcommand{\N}{\mathbb{N}}
\newcommand{\bN}{\mathbb{N}}
\newcommand{\E}{\mathbb{E}}
\newcommand{\bE}{\mathbb{E}}
\renewcommand{\P}{\mathbb{P}}
\renewcommand{\epsilon}{\varepsilon}
\newcommand{\bP}{\mathbb{P}}
\def\L{{\mathcal{L}}}

\definecolor{ob}{RGB}{72,145,220}  
\definecolor{or}{RGB}{190,15,52}   
\definecolor{og}{RGB}{105,145,59}  
\definecolor{oo}{RGB}{255,128,0}   
\definecolor{oy}{RGB}{245,207,71}  

\title[Stability results for distribution-dependent stochastic Volterra equations]{Stability results for distribution-dependent stochastic Volterra equations}

\author[Bergerhausen]{Martin Bergerhausen}
\address{Martin Bergerhausen, University of Mannheim, Germany}
\email{martin.bergerhausen@uni-mannheim.de}

\author[Pr{\"o}mel]{David J. Pr{\"o}mel}
\address{David J. Pr{\"o}mel, University of Mannheim, Germany}
\email{proemel@uni-mannheim.de}

\date{\today}

\begin{document}

\begin{abstract}
  We investigate stability properties of distribution-dependent stochastic Volterra equations with respect to changes in the coefficients, the Volterra kernels, and the initial condition. Under Lipschitz continuity assumptions on the coefficients, we first derive quantitative stability estimates for strong solutions with explicit error bounds. We then prove a general convergence theorem for strong solutions under substantially weaker assumptions, replacing Lipschitz continuity by a continuity assumption together with uniform linear growth of the approximating sequence. Finally, we study the associated distribution-dependent Volterra local martingale problem and prove the stability of its solutions under convergence of the coefficients, kernels, and initial distributions.
\end{abstract}

\maketitle

\noindent \textbf{Key words:} local martingale problem, McKean--Vlasov equation, non-Lipschitz coefficients, singular kernel, stability estimate, stochastic Volterra equation, strong and weak convergence.

\noindent \textbf{MSC 2020 Classification:} 60H20, 45D05.


\section{Introduction}

Distribution-dependent stochastic Volterra equations extend classical stochastic differential equations (SDEs) by combining two distinct features: memory effects, incorporated through Volterra kernels, and mean-field interactions, arising from the dependence of the coefficients on the marginal law of the solution. They therefore encompass both stochastic Volterra equations (SVEs) and McKean--Vlasov stochastic differential equations as particular cases. Stochastic Volterra equations have been studied since the pioneering works of Berger and Mizel \cite{Berger1980a,Berger1980b} and have attracted renewed attention in recent years, driven, among other things, by applications in mathematical finance; see, for instance, \cite{Jaisson2016,ElEuch2019,Mishura2024}. On the other hand, McKean--Vlasov SDEs, also known as distribution-dependent or mean-field SDEs, were introduced in the seminal works of Kac~\cite{Kac56}, McKean~\cite{McKean66} and Vlasov~\cite{Vlasov1968} and provide a well-established framework for modelling mean-field interactions and the dynamics of large systems of interacting particles; we refer to \cite{Carmona2018,Carmona2018b,Chaintron2022,Chaintron2022b} for a comprehensive account.

In the present paper we study the stability of distribution-dependent stochastic Volterra equations (DDSVEs) of the form
\begin{equation}\label{eq:intro}
  X_t = X_0 +\int_0^t K_{b}(s,t) b(s,X_s,\mathcal{L}(X_s))\dd s+\int_0^t K_{\sigma}(s,t)\sigma(s,X_s,\mathcal{L}(X_s))\dd B_s,\quad t\in [0,T],
\end{equation}
with respect to perturbations in the coefficients $b\colon[0,T]\times\R^d\times\mathcal{P}_{\eta}(\R^d)\to\R^d$ and $\sigma\colon[0,T]\times\R^d\times\mathcal{P}_{\eta}(\R^{d})\to\R^{d\times m}$, the (Volterra) kernels $K_b, K_\sigma\colon \Delta_T\to \R$, and the initial condition $X_0$, where $(B_t)_{t\in [0,T]}$ denotes a multi-dimensional Brownian motion. Throughout this work, we impose only fairly mild integrability and regularity assumptions on the kernels $K_b$ and $K_\sigma$, which, in particular, allow for singular kernels; cf. Subsection~\ref{subsec: kernel assumptions}.

A substantial well-posedness theory for DDSVEs has emerged in recent years, see \cite{Shi2013,Promel_MF,Liu2023,Jie2024,Kalinin2026,Bergerhausen2026}. This includes results on the existence and uniqueness of strong solutions, the existence of weak solutions, and the formulation of an associated distribution-dependent Volterra local martingale problem. By contrast, systematic results concerning the stability of DDSVEs under perturbations of their defining parameters remain comparatively scarce.

Stability is a fundamental component of the theory of stochastic differential equations. It quantifies the sensitivity of solutions to perturbations of the initial data and model coefficients and, at a more qualitative level, identifies conditions under which approximating stochastic systems converge to a prescribed limiting dynamics. In particular, such results are indispensable for the analysis of approximation schemes, numerical methods and statistical inference, as well as closely connected with existence proofs based on compactness and approximation. For ordinary and McKean--Vlasov stochastic differential equations, a variety of stability principles forms part of the classical theory; see, for example, the textbooks \cite{Stroock1979,Jacod2003,Carmona2018,Carmona2018b}. In the Volterra setting, however, the situation is more delicate since the solution of an SVE is generally neither Markovian nor a semimartingale. For classical SVEs, first stability results have been developed as a key ingredient in weak existence arguments; see \cite{AbiJaber2021,Promel2023_weak,AbiJaber2025}. Quantitative stability estimates also arise naturally in approximation and learning problems for SVEs; see, for instance, \cite{Bergerhausen2025_2}. The distribution-dependent setting introduces an additional layer of complexity through the nonlinear dependence of the dynamics on the marginal law of the solution.

Our first contribution is a quantitative stability estimate for strong solutions under global Lipschitz and linear growth assumptions on the coefficients, see Theorem~\ref{theorem:stability_MFSVE}. More precisely, we control the distance between two solutions in terms of the distance between their initial conditions, their drift and diffusion coefficients, and their Volterra kernels. The resulting estimate extends the corresponding result for ordinary SVEs in \cite[Proposition~2.1]{Bergerhausen2025_2} in two directions: the coefficients may depend on the marginal distribution of the solution, and the kernels are not required to be of convolution type. In particular, the estimate provides quantitative continuous dependence of DDSVEs on all components entering the equation.

Our second contribution is a considerably more general convergence theorem for strong  solutions which no longer requires Lipschitz continuity of the involved coefficients, see Theorem~\ref{theorem:approximation}. Assuming that the considered DDSVEs admit uniform strong solutions and that the approximating coefficients satisfy a uniform linear growth condition, we prove convergence of the corresponding solution provided the coefficients and kernels converge in suitable topologies. More precisely, the convergence holds in the expected uniform norm and uniformly over compact sets of initial values. The proof does not rely on a direct Lipschitz estimate. Instead, it combines uniform moment and regularity bounds with tightness, Skorokhod representation, and identification of subsequential limits through the distribution-dependent Volterra martingale problem. This result therefore provides a qualitative stability principle applicable well beyond the perturbative Lipschitz regime.

Our third contribution concerns stability at the level of the local martingale problem associated to the distribution-dependent stochastic Volterra equation~\eqref{eq:intro}, as introduced in \cite{Bergerhausen2026}. We consider a sequence of martingale problems whose initial distributions, coefficients, and Volterra kernels converge to corresponding limiting objects and prove weak convergence of their solutions, provided that the limiting martingale problem is unique in law, see Theorem~\ref{theorem:approximation_weak}. In view of the equivalence between the Volterra martingale problem and weak solutions established in \cite{Bergerhausen2026}, it yields a natural weak stability principle for DDSVEs. A central ingredient in the proof is the continuity, under simultaneous perturbation of the kernels and integrators, of the Volterra-type integral operators arising in the martingale formulation. The corresponding auxiliary convergence results are established in the appendix and may be useful beyond the present setting.

\medskip
\noindent \textbf{Organization of the paper:} In Section~\ref{sec:VMP} we introduce distribution-dependent stochastic Volterra equations together with their associated distribution-dependent Volterra local martingale problems. Section~\ref{subsec:strong_results} contains the stability theorems for strong solutions, that is, a quantitative estimate under Lipschitz assumptions and a general convergence theorem under continuity and growth assumptions. Section~\ref{subsec:weak_results} establishes the stability of solutions to the distribution-dependent Volterra local martingale problem. The appendix~\ref{sec: appendix} collects auxiliary results regarding stochastic Volterra-type integrals.

\medskip
\noindent\textbf{Acknowledgments:} D. J. Pr\"omel gratefully acknowledges his affiliation with the Department of Mathematics at King’s College London, United Kingdom.

\section{Distribution-dependent stochastic Volterra equations}\label{sec:VMP}

Let $T\in (0,\infty)$, $d,m\in\N$, and let $(\Omega,\mathcal{F},(\mathcal{F}_t)_{t\in [0,T]},\mathbb{P})$ be a filtered probability space, which satisfies the usual conditions of completeness and right-continuity. Suppose $B=(B_t)_{t\in [0,T]}$ is an $m$-dimensional Brownian motion with respect to $(\mathcal{F}_t)_{t\in [0,T]}$. The law of a random variable~$X$ is denoted by $\mathcal{L}(X)$ and, if $(E,d)$ is a complete separable metric space with distance~$d$, for any $p \geq 1$ we denote by $\mathcal{P}_p(E)$ the space of probability measures with finite $p$-th moments. For $\rho,\tilde{\rho}\in\mathcal{P}_p(E)$, the $p$-Wasserstein distance $W_p(\rho,\tilde{\rho})$ is defined by
\begin{equation*}
  W_p(\rho,\tilde{\rho}):= \inf_{\pi \in \Pi(\rho,\tilde{\rho})} \Big[ \int_{E \times E} d(x,y)^p \,\pi(\d x,\d y)\Big]^{1/p},
\end{equation*}
where $\Pi(\rho,\tilde{\rho})$ denotes the set of probability measures on $E\times E$ with marginals $\rho$ and $\tilde{\rho}$, see \cite[Chapter~5]{Carmona2018} for more details. The space $\R^d$ is equipped with the Euclidean norm~$| \cdot |$ and we set $\Delta_T:=\lbrace (s,t)\in [0,T]\times [0,T]\colon \, 0\leq s\leq t\leq T \rbrace$. For a measure space $\mathcal{X}$, a Banach spaces $\mathcal{Y}$ and $p\geq 1$, we use the notations $L^p(\mathcal{X};\mathcal{Y})$ for the space of all $\mathcal{Y}$-valued, measurable, $p$-integrable functions on $\mathcal{X}$, and we write $L^p(\mathcal{X}):=L^p(\mathcal{X};\R)$.

\medskip

Let $b\colon[0,T]\times\R^d\times\mathcal{P}_{\eta}(\R^d)\to\R^d$ and $\sigma\colon[0,T]\times\R^d\times\mathcal{P}_{\eta}(\R^{d})\to\R^{d\times m}$ for some $\eta \in \bN$ and the (Volterra) kernels $K_b, K_\sigma\colon \Delta_T\to \R$ be measurable functions. We consider the $d$-dimensional distribution-dependent stochastic Volterra equation (DDSVE)
\begin{equation}\label{eq:MFSVE}
  X_t = X_0 +\int_0^t K_{b}(s,t) b(s,X_s,\mathcal{L}(X_s))\dd s+\int_0^t K_{\sigma}(s,t)\sigma(s,X_s,\mathcal{L}(X_s))\dd B_s,\quad t\in [0,T],
\end{equation}
with initial distribution $\L(X_0)=\mu_0 \in \mathcal{P}_{p}(\R^d)$, that is, $X_0$ is a $d$-dimensional, $\mathcal{F}_0$-measurable random variable with distribution $\mu_0$ and with finite $p$-th moments, $p \geq \eta$, which is independent of $B$. The integral $\int_0^t K_{\sigma}(s,t)\sigma(s,X_s,\mathcal{L}(X_s))\dd B_s$ is defined as a stochastic It{\^o} integral.

\medskip

Let us recall the concepts of well-posedness, strong and weak solutions and pathwise uniqueness. An $(\mathcal{F}_t)$-progressively measurable stochastic process $(X_t)_{t\in [0,T]}$ in $L^p(\Omega\times [0,T];\R^d)$, on the given probability space $(\Omega,\mathcal{F},(\mathcal{F}_t)_{t\in[0,T]},\mathbb{P})$, is called a \textit{(strong) $L^p$-solution} of the DDSVE~\eqref{eq:MFSVE} if
\begin{equation*}
  \int_0^t (|K_b(s,t)b(s,X_s,\mathcal{L}(X_s))|+|K_\sigma(s,t)\sigma(s,X_s,\mathcal{L}(X_s))|^2 )\dd s<\infty \quad \text{for all }t\in[0,T],
\end{equation*}
and the integral equation~\eqref{eq:MFSVE} holds $\mathbb{P}$-almost surely. We say \textit{pathwise uniqueness in} $L^p$ holds for the DDSVE~\eqref{eq:MFSVE} if $\mathbb{P}(X_t=\tilde{X}_t, \,\forall t\in [0,T])=1$ for any two $L^p$-solutions $(X_t)_{t\in[0,T]}$ and $(\tilde{X}_t)_{t\in[0,T]}$ of \eqref{eq:MFSVE} defined on the same probability space $(\Omega,\mathcal{F},(\mathcal{F}_t)_{t\in[0,T]},\mathbb{P})$. We say that the DDSVE~\eqref{eq:MFSVE} is \textit{well-posed in $L^p$} (or that there exists a \textit{unique $L^p$-solution}) for $p\geq 1$ if there exists a strong $L^p$-solution to \eqref{eq:MFSVE} and pathwise uniqueness in $L^p$ holds. A \textit{weak solution} to \eqref{eq:MFSVE} is a triple $(X,B),(\Omega,\mathcal{F},\mathbb{P}),(\mathcal{F}_t)_{t\in [0,T]}$ such that
\begin{enumerate}
  \item[(i)] $(\Omega,\mathcal{F},\mathbb{P})$ is a probability space, $(\mathcal{F}_t)_{t\in [0,T]}$ is a filtration of sub-$\sigma$-algebras of $\mathcal{F}$ satisfying the usual conditions,
  \item[(ii)] $X=(X_t)_{t\in [0,T]}\in L^1(\Omega~\times~ [0,T]; \mathbb{R}^d)$ is an $(\mathcal{F}_t)$-progressively measurable stochastic process, $B=(B_t)_{t\in [0,T]}$ is an $m$-dimensional Brownian motion w.r.t. $(\mathcal{F}_t)_{t\in [0,T]}$,
  \item[(iii)] $\int_0^t\big(|K_b(s,t)b(s,X_s,\mathcal{L}(X_s))| + |K_\sigma(s,t)\sigma(s,X_s,\mathcal{L}(X_s))|^2  \big)\dd s<\infty$ $\P$-a.s. for any $t\in[0,T]$,
  \item[(iv)] \eqref{eq:MFSVE} holds for $(X,B)$ on $(\Omega,\mathcal{F},\mathbb{P})$, $\P$-a.s., and
  \item[(v)] $X_0$ is $\mu_0$-distributed.
\end{enumerate}
If a weak solution $(X_t)_{t \in [0,T]}$ to \eqref{eq:MFSVE} is in $L^p(\Omega \times [0,T];\bR^d)$, we may call it a \textit{weak $L^p$-solution}. We say that \textit{uniqueness in law} holds for the DDSVE~\eqref{eq:MFSVE} if any two weak solutions with the same initial distribution have the same law.

\begin{remark}
  In the following, we impose assumptions on the coefficients and kernels that ensure the existence of either a strong or a weak solution to \eqref{eq:MFSVE}, according to the results of \cite{Promel_MF} and \cite{Bergerhausen2026}, respectively. In \cite{Promel_MF}   strong well-posedness of \eqref{eq:MFSVE} is established in a multi-dimensional setting assuming suitable integrability and regularity on the kernels as well as Lipschitz continuity and a linear growth condition on the coefficients, and in a one-dimensional setting assuming sufficiently smooth kernels and H{\"o}lder continuous diffusion coefficients which are independent of the law of the solution. Replacing the Lipschitz continuity assumption by allowing the coefficients to be only continuous in space and law uniformly in time and generalizing the kernel assumption, the existence of a weak solution is obtained in \cite{Bergerhausen2026}. For further well-posedness results for DDSVEs, we refer to \cite{Shi2013,Liu2023,Jie2024,Kalinin2026}.
\end{remark}

Under suitable assumptions on the coefficients and kernels, the existence of weak solutions to the DDSVE~\eqref{eq:MFSVE} can be equivalently formulated in terms of solutions to an associated local martingale problem, see \cite[Proposition~2.8]{Bergerhausen2026}. To formulate an martingale problem for DDSVEs analogous to the classical (local) martingale problem associated to a stochastic differential equation (cf. e.g. \cite[Chapter~6]{Stroock1979} and \cite[Chapter~5]{Karatzas1991}), we assume that
\begin{equation*}
  Z\colon C([0,T];\R^d) \to \R^d, \quad \text{with}
  \quad Z_t(\omega):= \omega(t)
  \quad \text{for }\omega\in C([0,T];\R^d),
\end{equation*}
is the canonical process on the path space $C([0,T];\R^d)$, where $C([0,T];\R^d)$ denotes the space of all continuous functions $\omega\colon [0,T]\to \R^d$ equipped with the supremum norm $\|\cdot\|_{\infty}$ and $\mathcal{B}(C([0,T];\R^d))$ the Borel $\sigma$-algebra on $C([0,T];\R^d)$. Moreover, let $ C^2(\R^d)$ be the space of twice continuously differentiable functions $f\colon\R^d\to \R$ and $C_0^2(\R^d)$ be the space of all $f\in C^2(\R^d)$ with compact support.

\begin{definition}\label{def:martproblem_new}
  A probability measure $P$ on $(C([0,T];\R^d),\mathcal{B}(C([0,T];\R^d)))$ is called \textup{solution to the distribution-dependent Volterra local martingale problem} given $(\mu_0,b,\sigma,K_b,K_\sigma)$ if
  \begin{enumerate}
    \item[(i)] $(Z_t)_{t\in[0,T]}$ is a $d$-dimensional semimartingale and $Z_0 \sim \mu_0$, 
    \item[(ii)]the process $(\mathcal{M}^f_t)_{t\in[0,T]}$, given by
    \begin{equation}\label{def:M_f_new}
      \mathcal{M}^f_t:= f(Z_t)-\int_0^t \mathcal{A}^f(s,X_s,\L(X_s),Z_s)\dd s,\quad \mathcal{F}_t, \quad t\in [0,T],
    \end{equation}
    is a local martingale for every $f\in C_0^2(\R^d)$, where 
	\begin{equation}\label{eq:operator A}
      \mathcal{A}^{f}(t,x,\rho,z):=\sum_{i=1}^d b_i(t,x,\rho)\frac{\partial f(z)}{\partial z_i}+\frac{1}{2}\sum_{i,j=1}^d (\sigma\sigma^\top)_{ij}(t,x,\rho) \frac{\partial^2 f(z)}{\partial z_i \partial z_j},
	\end{equation}
	and
    \begin{equation}\label{eq:X_MP_new}
      X_t := Z_0 +\int_0^t K_b(s,t)\dd A_s + \int_0^tK_\sigma(s,t)\dd M_s, \quad t\in[0,T],
      \quad P\text{-a.s},
    \end{equation}
	$(A_t)_{t\in[0,T]}$ is predictable of bounded variation and $(M_t)_{t\in[0,T]}$ is a local martingale with $M_0=0$, such that $Z=A+M$. Here $\mathcal{F}_t = \mathcal{G}_{t+}$, and $(\mathcal{G}_t)_{t \in [0,T]}$ is the augmentation under $P$ of the canonical filtration $(\mathcal{B}_t)_{t \in [0,T]}$ with $\mathcal{B}_t := \sigma(Z_s\, :\, s\in [0,t])$.
  \end{enumerate}
\end{definition}

\begin{remark}
  The formulation of the distribution-dependent Volterra local martingale problem, presented in Definition~\ref{def:martproblem_new}, was introduced in \cite[Definition~2.5]{Bergerhausen2026}. In \cite[Proposition~2.8]{Bergerhausen2026}, it is shown that the existence of a weak solution of the DDSVEs~\eqref{eq:MFSVE} is indeed equivalent to the existence of a weak solution under fairly general assumptions. Furthermore, in the case of $K = 1$, we have $X = Z$ and the distribution-dependent Volterra local martingale problem reduces to the standard martingale problem for (McKean--Vlasov) stochastic differential equations, see e.g. \cite{Stroock1979,Funaki1984}.
\end{remark}

\subsection{Assumptions and remarks on the Volterra kernels}\label{subsec: kernel assumptions}

Throughout the paper, we impose the following assumption on the kernels. Henceforth, we fix the constants $\gamma$ and $\varepsilon$ as defined therein.

\begin{assumption}\label{ass:kernel}
  We say that a pair $(K_b,K_\sigma)$ satisfies this assumption given constants $(\gamma,\epsilon, L)$, $\gamma\in (0,\frac{1}{2}]$, $L>0$ and $\epsilon>0$, if $K_b, K_\sigma \colon \Delta_T\to \mathbb{R}$, are measurable functions fulfilling
    \begin{align*}
      &\int_0^t |K_{b}(s,t')-K_{b}(s,t)|^{1+\epsilon}\dd s + \int_t^{t'} |K_{b}(s,t')|^{1+\epsilon}\dd s \leq L|t'-t|^{\gamma(1+\epsilon)},\\
      &\int_0^t |K_{\sigma}(s,t')-K_{\sigma}(s,t)|^{2+\epsilon}\dd s  + \int_t^{t'} |K_{\sigma}(s,t')|^{2+\epsilon}\dd s  \leq L|t'-t|^{\gamma(2+\epsilon)},
    \end{align*}
    for all $(t,t^\prime)\in \Delta_T$.
\end{assumption}

Let us briefly discuss some auxiliary consequences of the preceding assumption, as well as several examples.

\begin{remark}\label{rem:asskernel_to_assshort}
  Note that kernels $(K_b,K_\sigma)$ satisfying Assumption~\ref{ass:kernel} also satisfy \cite[Assumption~2.2]{Bergerhausen2026} since
  \begin{equation*}
    \int_0^{t} |K_b(s,t)| \dd s \leq t^{\frac{\epsilon}{1+\epsilon}} \Big( \int_0^{t} |K_b(s,t)|^{1+\epsilon} \dd s\Big)^{\frac{1}{1+\epsilon}} \leq L^{\frac{1}{1+\epsilon}}\, t^{\gamma+\frac{\epsilon}{1+\epsilon}}
  \end{equation*}
  and
  \begin{equation*}
    \int_0^{t} |K_\sigma(s,t)|^2 \dd s \leq t^{\frac{\epsilon}{2+\epsilon}} \Big( \int_0^{t} |K_{\sigma}(s,t)|^{2+\epsilon} \dd s\Big)^{\frac{2}{2+\epsilon}} \leq L^{\frac{2}{2+\epsilon}}\, t^{2\gamma+\frac{\epsilon}{2+\epsilon}},
  \end{equation*}
  that is, $K_b(\cdot,t) \in L^1([0,t])$ and $K_\sigma(\cdot,t) \in L^2([0,t])$, for every $ t \in [0,T]$. Moreover, let us remark, because of
  \begin{align*}
    \int_0^T \int_0^{t} |K_b(s,t)| \dd s \dd t \leq \tilde{L} \int_0^T t^{\gamma+\frac{\epsilon}{1+\epsilon}} \dd t \leq \tilde{L}\frac{T^{\gamma+\frac{\epsilon}{1+\epsilon}+1}}{\gamma+\frac{\epsilon}{1+\epsilon}+1}
  \end{align*}
  and
  \begin{align*}
    \int_0^T \int_0^{t} |K_\sigma(s,t)|^2 \dd s \dd t \leq \tilde{L} \int_0^T t^{2\gamma+\frac{\epsilon}{2+\epsilon}} \dd t \leq \tilde{L}\frac{T^{2\gamma+\frac{\epsilon}{2+\epsilon}+1}}{2\gamma+\frac{\epsilon}{2+\epsilon}+1},
  \end{align*}
  we have $K_b \in L(\Delta_T)$ and $K_\sigma \in L^2(\Delta_T)$.
\end{remark}

\begin{remark}\label{rmk:BDG inequality}
  In general, under general regularity assumptions on the kernels, as they are imposed in Assumption~\ref{ass:kernel}, the stochastic process
  \begin{align*}
    \Big( \int_0^t K_\sigma(s,t)\sigma(s,X_s,\mathcal{L}(X_s)) \dd B_s\Big)_{t \in [0,T]}
  \end{align*}
  is not a local martingale. However, fixing the second argument in the kernel, the stochastic process
  \begin{align*}
    \Big( \int_0^t K_\sigma(s,\tilde{t})\sigma(s,X_s,\mathcal{L}(X_s)) \dd B_s\Big)_{t \in [0,\tilde{t}]}
  \end{align*}
  is a local martingale for every $\tilde{t} \in (0,T]$. Hence, we can apply the classical Burkholder--Davis--Gundy inequality to this martingale to obtain
  \begin{align*}
    \bE \Big[ \sup_{t \in [0,\tilde{t}]} \Big| \int_0^t K_\sigma(s,\tilde{t}) \sigma(s,X_s,\mathcal{L}(X_s))\dd B_s\Big|^p \Big] \leq C_p \bE\Big[ \Big( \int_0^{\tilde{t}} (K_\sigma(s,\tilde{t}) \sigma (s,X_s,\mathcal{L}(X_s)))^2 \dd s \Big)^{p/2} \Big],
  \end{align*}
  $\tilde{t} \in [0,T]$, for some positive $C_p$ and, in particular,
  \begin{align}\label{eq:BDG}
    \bE \Big[ \Big| \int_0^{\tilde{t}} K_\sigma(s,\tilde{t}) \sigma(s,X_s,\mathcal{L}(X_s))\dd B_s \Big|^p \Big] \leq C_p \bE\Big[ \Big( \int_0^{\tilde{t}} (K_\sigma(s,\tilde{t}) \sigma (s,X_s,\mathcal{L}(X_s)))^2 \dd s \Big)^{p/2} \Big]
  \end{align}
  holds for every $\tilde{t} \in [0,T]$. In the following, we often refer to the Burkholder--Davis--Gundy inequality meaning that we use \eqref{eq:BDG}.
\end{remark}

\begin{remark}
  Assumption~\ref{ass:kernel} is satisfied, e.g., by the following type of diffusion kernels:
  \begin{enumerate}
    \item[(i)] $K_\sigma(s,t):=\tilde{K}(t-s)$ for a Lipschitz continuous function $\tilde{K}\colon [0,T]\to \R$,
    \item[(ii)] $K_\sigma(s,t):=C(t-s)^{-\alpha}$ for $\alpha\in(0,\frac{1}{2})$ with $\varepsilon \in (0,\frac{1}{\alpha}-2)$ and $\gamma \in (0,\frac{1}{2+\varepsilon}-\alpha]$,
    \item[(iii)] weakly differentiable kernels such that $\partial_1 K_\sigma(s,t)\leq C(t-s)^{-\alpha}$ for $\alpha\in(0,\frac{1}{2})$, and
    \item[(iv)] a finite combination of exponential kernel $K_\sigma(s,t) = \sum_{i=1}^k c_i \exp(- \theta_i (t-s))$ with $k \in \bN$, $c_i >0$ and $\theta_i \in \bR^+$ for $i = 1, \ldots, k$ with $\gamma \in (0, \frac{1}{2+ \varepsilon}]$,
    \item[(v)] the gamma kernel $K_\sigma(s,t) = \frac{1}{\Gamma(\alpha)} \exp(- \beta (t-s)) (t-s)^{\alpha -1}$ with $\beta >0$ and exponents $\alpha \in (\frac{1}{2},1)$ with $\varepsilon< \frac{2 \alpha - 1}{1-\alpha}$ and $\gamma = \alpha - \frac{1+\varepsilon}{2+\varepsilon}$,
    \item[(vi)] kernels fulfilling \cite[Assumption~2.1]{Promel2023}.
  \end{enumerate}
  In all cases one needs an initial condition with $p> \max\{\frac{2 \eta +1}{\gamma}, 2+\frac{4}{\varepsilon}\}$ finite moments.
\end{remark}

\section{Stability results for strong solutions to DDSVEs}\label{subsec:strong_results}

In this section, we investigate the stability of strong solutions to distribution-dependent stochastic Volterra equations with respect to their coefficients. We begin in Subsection~\ref{subsec:Lipschitz coefficients} by deriving a quantitative stability estimate under Lipschitz continuity and linear growth assumptions on the coefficients. We then establish a general convergence result in Subsection~\ref{subsec: continuity assumptions}, where the Lipschitz continuity assumption is replaced by local continuity of the limiting coefficients, uniformly in time, together with a uniform linear growth condition on the approximating coefficients.

\subsection{Quantitative stability estimate under Lipschitz assumptions}\label{subsec:Lipschitz coefficients}

In this subsection, we shall prove a quantitative stability estimate under Lipschitz and linear growth assumptions on the coefficients. To that end, as comparison to the DDSVE~\eqref{eq:MFSVE}, we consider the DDSVE
\begin{equation}\label{eq:MFSVE2}
  \tilde{X}_t = \tilde{X}_0+\int_0^t \tilde{K}_{b}(s,t)\tilde{b}(s,\tilde{X}_s,\mathcal{L}(\tilde{X}_s))\dd s+\int_0^t \tilde{K}_{\sigma}(s,t)\tilde{\sigma}(s,\tilde{X}_s,\mathcal{L}(\tilde{X}_s))\dd B_s,\quad t\in [0,T],
\end{equation}
where $\tilde{X}_0$ is a $d$-dimensional, $\mathcal{F}_0$-measurable random variable, and where the coefficients $\tilde{b}\colon [0,T]\times\R^d \times\mathcal{P}_{\eta}(\R^d)\to\R^d $ and $\tilde{\sigma}\colon [0,T]\times\R^d\times\mathcal{P}_{\eta}(\R^d)\to\R^{d\times m}$, and the kernels $\tilde{K}_b, \tilde{K}_\sigma \colon \Delta_T \to \R$ are measurable functions.

\medskip

We need to impose the following Lipschitz and linear growth condition on the coefficients:

\begin{assumption}\label{ass:coefficients1}
  Let $b, \tilde{b}\colon [0,T]\times\R^d\times\mathcal{P}_{\eta}(\R^d)\to\R^d $ and $\sigma,\tilde{\sigma}\colon [0,T]\times\R^d\times\mathcal{P}_{\eta}(\R^d)\to\R^{d\times m} $ be measurable functions such that:
  \begin{enumerate}
    \item[(i)] for any bounded set $\mathcal{K}\subset \mathcal{P}_{\delta}(\R^d)$, there is a constant $C_{\mathcal{K}}>0$, such that the linear growth condition
    \begin{equation*}
      |b(t,x,\rho)|+|\tilde{b}(t,x,\rho)|+|\sigma(t,x,\rho)|+|\tilde{\sigma}(t,x,\rho)|\leq C_{\mathcal{K}} (1+|x|)
    \end{equation*}
    holds for all $\rho \in \mathcal{K}$, $t\in[0,T]$ and $x\in\R^d$;
    \item[(ii)] $b$, $\tilde{b}$, $\sigma$ and $\tilde{\sigma}$ are Lipschitz continuous in $x$ and in $\rho$ w.r.t. the $\eta$-Wasserstein distance, uniformly in time, i.e. there is a constant $C_{b,\sigma}>0$ such that
    \begin{align*}
      |b(t,x,\rho)-b(t,y,\nu)|+|\sigma(t,x,\rho)-\sigma(t,y,\nu)|
      \leq C_{b,\sigma}\big(|x-y|+W_{\eta}(\rho,\nu)\big),\\
      |\tilde{b}(t,x,\rho)-\tilde{b}(t,y,\nu)|+|\tilde{\sigma}(t,x,\rho)-\tilde{\sigma}(t,y,\nu)|
      \leq C_{b,\sigma}\big(|x-y|+W_{\eta}(\rho,\nu)\big),
    \end{align*}
    holds for all $t\in [0,T]$, $x,y\in \R^d$, and $\rho,\nu\in \mathcal{P}_{\eta}(\R^d)$.
  \end{enumerate}
  Furthermore for $\delta := 2 + \frac{4}{\epsilon}$ and $p > \max\{\frac{1}{\gamma}, \delta\}$, $p \geq \eta$ we let $\bE[|X_0|^p] < \infty$ and $\bE[|\tilde{X}_0|^p] < \infty$.
\end{assumption}

For $q:= \frac{p}{p-1}$ and $\tilde{q}:= \frac{p}{p-2}$, we note that
\begin{equation*}
  \frac{1}{p}+ \frac{1}{q} = 1 \qquad \text{ and } \qquad \frac{2}{p} + \frac{1}{\tilde{q}} = 1
\end{equation*}
and furthermore
\begin{equation*}
  \|K_b\|_q + \|\tilde{K}_b\|_q < \infty \qquad \text{ and } \qquad \|K_\sigma\|_{2 \tilde{q}} + \|\tilde{K}_\sigma\|_{2 \tilde{q}} < \infty,
\end{equation*}
where
\begin{equation*}
  \|h\|_p := \Big( \sup_{t\in[0,T]} \int_0^t |h(s,t)|^p \dd s \Big)^{\frac{1}{p}}.
\end{equation*}

The next theorem presents the quantitative stability estimate for DDSVEs under Lipschitz and linear growth assumptions on the coefficients.

\begin{theorem}\label{theorem:stability_MFSVE}
  Suppose  $(K_b,K_{\sigma})$ and $(\tilde{K}_b,\tilde{K}_{\sigma})$ satisfy Assumption~\ref{ass:kernel}, Assumption~\ref{ass:coefficients1} is satisfied and assume $X_0, \tilde{X}_0 \in L^p(\Omega)$. Let $(X_t)_{t\in[0,T]}$ and $(\tilde{X}_t)_{t\in[0,T]}$ be the strong solution to the DDSVE \eqref{eq:MFSVE} and \eqref{eq:MFSVE2}, respectively. Then, there is some constant $C>0$, depending on $b$, $\sigma$, $\tilde{b}$, $\tilde{\sigma}$, ${K}_b$, ${K}_\sigma$, $\tilde{K}_b$, $\tilde{K}_\sigma$ and $p$, such that
  \begin{equation}
    \sup_{t\in [0,T]}\E[|X_t-\tilde{X}_t|^p]\leq C \Big( \bE[|X_0-\tilde{X}_0|^p]+\|b-\tilde{b}\|_{\infty}^p+\|\sigma-\tilde{\sigma}\|_{\infty}^p+\|K_b-\tilde{K}_b\|_{q}^p + \|K_\sigma -\tilde{K}_\sigma\|_{2\tilde{q}}^p \Big).
  \end{equation}
\end{theorem}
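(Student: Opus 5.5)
The strategy is a direct Gronwall-type estimate of the difference $X_t-\tilde X_t$. This relies on a priori moment bounds, Hölder's inequality with exponents $(p,q)$ and $(p/2,\tilde q)$, and a Volterra (weakly singular) Gronwall lemma. As a preliminary step, I would establish the uniform moment bound $\sup_{t\in[0,T]}\E[|X_t|^p]+\sup_{t\in[0,T]}\E[|\tilde X_t|^p]<\infty$. This follows from the well-posedness theory of \cite{Promel_MF}, or directly from the linear growth condition in Assumption~\ref{ass:coefficients1}(i) combined with \eqref{eq:BDG} and Gronwall. The bound is needed because the marginal laws then lie in a bounded subset $\mathcal K$ of $\mathcal P_\delta(\R^d)$, so a single constant $C_{\mathcal K}$ applies in the growth condition. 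Moreover, $W_\eta(\mathcal L(X_s),\mathcal L(\tilde X_s))\le \E[|X_s-\tilde X_s|^p]^{1/p}$, since $p\ge\eta$ and we may use the synchronous coupling.

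Next I would write $X_t-\tilde X_t=(X_0-\tilde X_0)+I^b_t+I^\sigma_t$ and split each integral into three pieces. For the drift this reads
\begin{align*}
I^b_t&=\int_0^t (K_b-\tilde K_b)(s,t)\, b(s,X_s,\mathcal L(X_s))\dd s+\int_0^t \tilde K_b(s,t)\big(b-\tilde b\big)(s,X_s,\mathcal L(X_s))\dd s\\
&\quad+\int_0^t \tilde K_b(s,t)\big(\tilde b(s,X_s,\mathcal L(X_s))-\tilde b(s,\tilde X_s,\mathcal L(\tilde X_s))\big)\dd s ,
\end{align*}
and analogously for $I^\sigma_t$. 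For the first piece, Hölder's inequality with $(q,p)$ gives $\E|\cdot|^p\le \|K_b-\tilde K_b\|_q^p\int_0^T\E[|b(s,X_s,\mathcal L(X_s))|^p]\dd s$, which is bounded by $C\|K_b-\tilde K_b\|_q^p$ thanks to linear growth and the moment bound. For the stochastic analogue I would first apply \eqref{eq:BDG}, then Hölder with exponents $(\tilde q,p/2)$ on $\int |K_\sigma-\tilde K_\sigma|^2|\sigma|^2\dd s$, which produces $\|K_\sigma-\tilde K_\sigma\|_{2\tilde q}^p$. The second pieces are bounded by $\|b-\tilde b\|_\infty^p\|\tilde K_b\|_1^p$ and by $\|\sigma-\tilde\sigma\|_\infty^p\|\tilde K_\sigma\|_2^p$ (after BDG), using Remark~\ref{rem:asskernel_to_assshort}.

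The third pieces carry the Lipschitz dependence, and this is the main obstacle: the kernels may be singular, so we cannot simply pull out a sup norm. I would use Hölder in the form $\big|\int_0^t \tilde K_b(s,t) g_s\dd s\big|^p\le \|\tilde K_b\|_1^{p-1}\int_0^t|\tilde K_b(s,t)|\,|g_s|^p\dd s$, together with the analogous estimate after BDG, $\big(\int_0^t \tilde K_\sigma^2 |g_s|^2\dd s\big)^{p/2}\le \|\tilde K_\sigma\|_2^{p-2}\int_0^t \tilde K_\sigma(s,t)^2|g_s|^p\dd s$. Combined with Assumption~\ref{ass:coefficients1}(ii) and the Wasserstein bound, this yields, for $\phi(t):=\E[|X_t-\tilde X_t|^p]$,
\[
\phi(t)\le C\,\Delta + C\int_0^t \big(|\tilde K_b(s,t)|+\tilde K_\sigma(s,t)^2\big)\phi(s)\dd s,
\]
where $\Delta$ denotes the bracket on the right-hand side of the theorem.

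To close the argument I would invoke a Gronwall lemma for Volterra kernels. The kernel $k(s,t)=|\tilde K_b(s,t)|+\tilde K_\sigma(s,t)^2$ satisfies $\sup_t\int_0^t k(s,t)^{r}\dd s<\infty$ for some $r>1$ by Assumption~\ref{ass:kernel}. Hence, after applying Hölder once more, $\phi(t)\le C\Delta+C\big(\int_0^t\phi(s)^{r'}\dd s\big)^{1/r'}$. Raising this to the power $r'$ reduces it to the classical Gronwall inequality for $\phi^{r'}$, which is legitimate because $\phi$ is bounded by the preliminary moment estimate. This gives $\sup_t\phi(t)\le C\Delta$.
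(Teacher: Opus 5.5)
Your proof is correct, and its overall structure matches the paper's: the splitting of the drift and diffusion differences, H\"older with $(q,p)$ and, after \eqref{eq:BDG}, with $(\tilde q,p/2)$ on the kernel-difference pieces, uniform $p$-th moment bounds, the coupling bound $W_\eta(\mathcal L(X_s),\mathcal L(\tilde X_s))\le \E[|X_s-\tilde X_s|^p]^{1/p}$, and Gr\"onwall.

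The genuine difference is in the Lipschitz pieces. The paper does not treat the singular kernel as an obstacle there. It applies to $\tilde K_b$ and $\tilde K_\sigma$ exactly the H\"older step you already use for $K_b-\tilde K_b$ and $K_\sigma-\tilde K_\sigma$, pulling out $\|\tilde K_b\|_q^p$ and $\|\tilde K_\sigma\|_{2\tilde q}^p$. These are finite because $p>\delta=2+\frac{4}{\epsilon}$ forces $q<1+\epsilon$ and $2\tilde q<2+\epsilon$. This gives directly $\phi(t)\le C\Delta+C\int_0^t\phi(s)\dd s$, and the classical Gr\"onwall lemma finishes.

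Your route is valid but longer: Jensen with respect to $|\tilde K_b(s,t)|\dd s$ and $\tilde K_\sigma(s,t)^2\dd s$ gives a Volterra inequality with kernel $k=|\tilde K_b|+\tilde K_\sigma^2$, then H\"older with some $r\in(1,1+\frac{\epsilon}{2}]$ and raising to the power $r'$. Its only gain is that the Lipschitz pieces need just $\tilde K_b\in L^1$, $\tilde K_\sigma\in L^2$ and a $p$-independent $L^r$-bound on $k$. That buys nothing here, since the statement already measures the kernel differences in $\|\cdot\|_q$ and $\|\cdot\|_{2\tilde q}$.

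One small point on the preliminary moment bound. You cannot derive it "directly" from Assumption~\ref{ass:coefficients1}(i), because the constant $C_{\mathcal K}$ is only available once you know the marginal laws lie in a bounded set, which is what you are trying to prove. Instead, combine (ii) with (i) for $\mathcal K=\{\delta_0\}$ to get $|b(t,x,\rho)|+|\sigma(t,x,\rho)|\le C(1+|x|+W_\eta(\rho,\delta_0))$ and run Gr\"onwall with that. Alternatively, quote an a priori moment lemma from the literature, as the paper does.
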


\begin{proof}
  Let $t\in [0,T]$ and $C>0$ be a generic constant, which may change from line to line. We get that
  \begin{align*}
    & \E\big[ |X_t-\tilde{X}_t|^p \big]\\
    &= \E\bigg[ \bigg| X_0 - \tilde{X}_0 + \int_0^t K_b(s,t)b(s,X_s,\mathcal{L}(X_s))\dd s - \int_0^t \tilde{K}_b(s,t)\tilde{b}(s,\tilde{X}_s,\mathcal{L}(\tilde{X}_s))\dd s\\
    &\quad + \int_0^t K_\sigma(s,t)\sigma(s,X_s,\mathcal{L}(X_s))\dd B_s - \int_0^t \tilde{K}_\sigma(s,t)\tilde{\sigma}(s,\tilde{X}_s,\mathcal{L}(\tilde{X}_s))\dd B_s \bigg|^p\bigg]\\
    &\leq C \bigg( \bE[|X_0 - \tilde{X}_0|^p] + \E\Big[\Big|\int_0^t \big(K_b(s,t)-\tilde{K}_b(s,t)\big)b(s,X_s,\mathcal{L}(X_s))\dd s\Big|^p\Big] \\
    &\qquad+ \E\Big[\Big| \int_0^t \tilde{K}_b(s,t)\big(b(s,X_s,\mathcal{L}(X_s))-\tilde{b}(s,\tilde{X}_s,\mathcal{L}(\tilde{X}_s))\big)\dd s\Big|^p\Big]\\
    &\qquad + \E\Big[\Big|\int_0^t \big(K_\sigma(s,t)-\tilde{K}_\sigma(s,t)\big)\sigma(s,X_s,\mathcal{L}(X_s))\dd B_s\Big|^p\Big]\\
    &\qquad + \E\Big[\Big| \int_0^t \tilde{K}_\sigma(s,t)\big(\sigma(s,X_s,\mathcal{L}(X_s))-\tilde{\sigma}(s,\tilde{X}_s,\mathcal{L}(\tilde{X}_s))\big)\dd B_s\Big|^p\Big] \bigg)\\
    &\leq C \bigg( \bE[|X_0 - \tilde{X}_0|^p] + \Big( \int_0^t \big|K_b(s,t)-\tilde{K}_b(s,t)\big|^q\dd s\Big)^{\frac{p}{q}} \E \Big[ \int_0^t\big|b(s,X_s,\mathcal{L}(X_s))\big|^p\dd s\Big] \\\
    &\qquad+\Big(\int_0^t \big|\tilde{K}_b(s,t)\big|^q\dd s\Big)^{\frac{p}{q}} \E\Big[ \int_0^t \big|b(s,X_s,\mathcal{L}(X_s))-\tilde{b}(s,\tilde{X}_s,\mathcal{L}(\tilde{X}_s))\big|^p\dd s\Big]\\
    &\qquad + \|K_\sigma - \tilde{K}_\sigma\|^p_{2 \tilde{q}} \int_0^t \E[|\sigma(s,X_s,\mathcal{L}(X_s))|^p]\dd s\\
    &\qquad + C \int_0^t \E[|\sigma(s,X_s,\mathcal{L}(X_s))-\tilde{\sigma}(s,\tilde{X}_s,\mathcal{L}(\tilde{X}_s))|^p] \dd s \bigg)\\
    &\leq C \bigg( \bE[|X_0 - \tilde{X}_0|^p] + \Big( \int_0^t \big|K_b(s,t)-\tilde{K}_b(s,t)\big|^q\dd s\Big)^{\frac{p}{q}} \E \Big[ \int_0^t\big|b(s,X_s,\mathcal{L}(X_s))\big|^p\dd s\Big] \\\
    &\qquad+\Big(\int_0^t \big|\tilde{K}_b(s,t)\big|^q\dd s\Big)^{\frac{p}{q}} \bigg(\E\Big[ \int_0^t \big|b(s,X_s,\mathcal{L}(X_s))-b(s,\tilde{X}_s,\mathcal{L}(\tilde{X}_s))\big|^p\dd s\Big]\\
    &\qquad \qquad +\E\Big[ \int_0^t \big|b(s,\tilde{X}_s,\mathcal{L}(\tilde{X}_s))-\tilde{b}(s,\tilde{X}_s,\mathcal{L}(\tilde{X}_s))\big|^p\dd s\Big]\bigg)\\
    &\qquad + \|K_\sigma - \tilde{K}_\sigma\|^p_{2 \tilde{q}} \int_0^T \E[|\sigma(s,X_s,\mathcal{L}(X_s))|^p]\dd s\\
    &\qquad + C \bigg(\int_0^t \E[|\sigma(s,X_s,\mathcal{L}(X_s))-\sigma(s,\tilde{X}_s,\mathcal{L}(\tilde{X}_s))|^p] \dd s\\
    &\qquad\qquad + \int_0^t \E[|\sigma(s,\tilde{X}_s,\mathcal{L}(\tilde{X}_s))-\tilde{\sigma}(s,\tilde{X}_s,\mathcal{L}(\tilde{X}_s))|^p] \dd s \bigg) \bigg).
  \end{align*}
    
  Recall that $p \geq \eta$ and note that
  \begin{equation}\label{eq:Wasserstein_bound}
    W_\eta(\delta_0,\L(X_s))^p \leq \bE[|X_s|^\eta]^{\frac{p}{\eta}} \leq \bE[|X_s|^p].
  \end{equation}
  By \cite[Lemma~4.1]{Bergerhausen2026}, one obtains the boundedness of the first $p$ moments of distribution-dependent Volterra processes. Hence, \eqref{eq:Wasserstein_bound} is uniformly bounded. Using the regularity assumptions on $b$ and $\sigma$ (Assumption~\ref{ass:coefficients1}) and again the boundedness of the first $p$ moments of distribution-dependent Volterra processes, we get
  \begin{align*}
    & \E\big[|X_t-\tilde{X}_t|^p \big]\\
    &\leq C \bigg( \bE[|X_0 - \tilde{X}_0|^p] + \|K_b-\tilde{K}_b\|_q^p + \|K_\sigma - \tilde{K}_\sigma\|_{2 \tilde{q}}^p\\
    &\qquad + \int_0^t \E\big[\big|b(s,X_s,\mathcal{L}(X_s))-b(s,\tilde{X}_s,\mathcal{L}(\tilde{X}_s))\big|^p\big] + \|b-\tilde{b}\|_\infty^p \dd s\notag\\
    &\qquad + \int_0^t\E\Big[\big|\sigma(s,X_s,\mathcal{L}(X_s))-\tilde{\sigma}(s,\tilde{X}_s,\mathcal{L}(\tilde{X}_s))\big|^p\Big]+ \|\sigma-\tilde{\sigma}\|_\infty^p\dd s \bigg).
  \end{align*}
  Note that by the Lipschitz assumption and the properties of the Wasserstein distance
  \begin{align*}
    \E\big[\big|b(s,X_s,\mathcal{L}(X_s))-b(s,\tilde{X}_s,\mathcal{L}(\tilde{X}_s))\big|^p\big] & \leq \E[C(|X_s-\tilde{X}_s| + W_\eta(\mathcal{L}(X_s),\mathcal{L}(\tilde{X}_s)))^p]\\
    & \leq C(\E[|X_s-\tilde{X}_s|^p] + \E[|X_s - \tilde{X}_s]^\eta]^\frac{p}{\eta})\\
    & \leq C \E[|X_s - \tilde{X}_s|^p].
  \end{align*}
  Hence, we obtain
  \begin{align*}
    \E\big[ |X_t-\tilde{X}_t|^p \big]
    \leq C &\bigg( \bE[|X_0 - \tilde{X}_0|^p] +\|b-\tilde{b}\|_{\infty}^p+\|\sigma-\tilde{\sigma}\|_{\infty}^p+\|K_b-\tilde{K}_b\|_{q}^p + \|K_\sigma -\tilde{K}_\sigma\|_{2\tilde{q}}^p\bigg)\\
    &\quad +C \int_0^t \E\big[\big|X_s-\tilde{X}_s\big|^p\big]\dd s.
  \end{align*}
  Applying Gr{\"o}nwall's lemma, one gets
  \begin{align*}
    \E\big[|X_t-\tilde{X}_t|^p \big]
    \leq C &\bigg( \bE[|X_0 - \tilde{X}_0|^p] + \|b-\tilde{b}\|_{\infty}^p+\|\sigma-\tilde{\sigma}\|_{\infty}^p+\|K_b-\tilde{K}_b\|_{q}^p + \|K_\sigma -\tilde{K}_\sigma\|_{2\tilde{q}}^p\bigg).
  \end{align*}
  Then, taking the supremum in $t\in [0,T]$ finishes the proof.
\end{proof}

\begin{remark}
  Theorem~\ref{theorem:stability_MFSVE} extends \cite[Proposition~2.1]{Bergerhausen2025_2} in two directions. First, it applies to general Volterra kernels $K_b,K_\sigma \colon \Delta_T\to\mathbb R$ satisfying Assumption~\ref{ass:kernel}, whereas \cite[Proposition~2.1]{Bergerhausen2025_2} is restricted to convolution kernels of the form $K(s,t)=k(t-s)$. Second, the coefficients considered here may depend on the marginal law of the solution, while \cite[Proposition~2.1]{Bergerhausen2025_2} treats coefficients depending only on time and state.
\end{remark}

\subsection{Stability result under continuity assumptions}\label{subsec: continuity assumptions}

In this subsection, we establish a general convergence result in which the Lipschitz continuity assumption from the previous subsection is relaxed to local continuity of the limiting coefficients, uniformly in time, together with a uniform linear growth condition on the approximating coefficients.

\medskip

For this purpose, we consider the distribution-dependent stochastic Volterra equation
\begin{align}\label{eq:MVSVE_2}
  \begin{split}
  X(t,x) = x &+\int_0^t K_{b}(s,t) b(s,X(s,x),\mathcal{L}(X(s,x)))\dd s\\
  &+\int_0^t K_{\sigma}(s,t)\sigma(s,X(s,x),\mathcal{L}(X(s,x)))\dd B_s, \qquad t\in [0,T],
  \end{split}
\end{align}
and analyse the behaviour of the solution $X_n(x,t)$, $n \in \bN$, of
\begin{align}\label{eq:MVSVE_2app}
  \begin{split}
  X_n(t,x) = x &+\int_0^t K_{b,n}(s,t) b_n(s,X_n(s,x),\mathcal{L}(X_n(s,x)))\dd s\\
  &+\int_0^t K_{\sigma,n}(s,t)\sigma_n(s,X_n(s,x),\mathcal{L}(X_n(s,x)))\dd B_s, \qquad t\in [0,T],
  \end{split}
\end{align}
as $(K_{b,n}, K_{\sigma,n}, b_n, \sigma_n)$, $n \in \bN$, approaches $(K_{b}, K_{\sigma}, b, \sigma)$ in a suitable way, assuming suitable measurability for all involved functions. More precisely, the assumption on the coefficients reads as follows.

\begin{assumption}\label{ass:volatility}
  There is an $\eta \geq 1$ such that the coefficients $b, b_n \colon [0,T] \times \mathbb{R}^d \times \mathcal{P}_\eta(\mathbb{R}^d)\to \bR^{d}$ and $\sigma, \sigma_n \colon [0,T] \times \mathbb{R}^d \times \mathcal{P}_\eta(\mathbb{R}^d)\to \bR^{d \times m}$ for all $n \in \bN$ are measurable functions such that
  \begin{itemize}
    \item there exists a constant $L>0$ such that, for all $(t,x,\mu) \in [0,T]\times \bR^d \times \mathcal{P}_{\eta}(\bR^d)$,
    \begin{align}\label{eq:lin_grow_bound}
      \sup_{n \in \bN} |\sigma_n(t,x,\mu)|+|b_n(t,x,\mu)| \leq L(1+|x|+W_\eta(\delta_0,\mu)),
    \end{align}
    \item for all compact sets $\mathcal K \subset \bR^d$, $\bar{\mathcal K}\subset \mathcal{P}_\eta(\bR^d)$ and every $\epsilon >0$ there exists a $\delta >0$ such that
  \begin{equation*}
    |\sigma(t,x,\mu)-\sigma(t,y,\nu)| + |b(t,x,\mu)-b(t,y,\nu)| \leq \epsilon
  \end{equation*}
  for all $t \in [0,T]$, $x,y \in \mathcal K$ satisfying $|x-y| \leq \delta$, and $\mu, \nu \in \bar{\mathcal K}$ with $W_\eta(\mu,\nu)\leq \delta$.
  \end{itemize}
\end{assumption}

The main result of this section is the following theorem.

\begin{theorem}\label{theorem:approximation}
  Suppose that
  \begin{itemize}
	\item Assumption \ref{ass:volatility} holds,
	\item all pairs of kernels $(K_{b,n}, K_{\sigma,n})$, $n \in \bN$, and $(K_{b}, K_{\sigma})$ satisfy Assumption~\ref{ass:kernel} using the same constants $\gamma, \epsilon, L$,
	\item \eqref{eq:MVSVE_2} and \eqref{eq:MVSVE_2app} have unique strong $L^p$-solutions for each $x \in \bR^d$ and $n \in \bN$, respectively, for $p\geq \eta$ and $p>2+\frac{4}{\epsilon}$,
    \item for any compact subset $K \subset \bR^d$ and $\mathcal K \subset \mathcal P_\eta(\bR^d)$
    \begin{equation*}
      \lim_{n \to \infty} \sup_{t \in [0,T]} \sup_{x \in K} \sup_{\mu \in \mathcal K} |\sigma_n(t,x,\mu)-\sigma(t,x,\mu)|+|b_n(t,x,\mu)-b(t,x,\mu)|=0.
    \end{equation*}
    Additionally
    \begin{equation*}
      \sup_{t \in [0,T]} \int_0^t |K_{b,n}(s,t)-K_b(s,t)|^{1+\epsilon} \dd s \to 0 \text{ as } n \to \infty
    \end{equation*}
    and
    \begin{equation*}
      \sup_{t \in [0,T]} \int_0^t |K_{\sigma,n}(s,t)-K_\sigma(s,t)|^{2+\epsilon} \dd s \to 0 \text{ as } n \to \infty.
    \end{equation*}
  \end{itemize}
  Then
  \begin{align*}
    \lim_{n \to \infty} \sup_{x \in K} \bE\big [\max_{t \in [0,T]} |X_n(t,x)-X(t,x)|^2\big] = 0
  \end{align*}
  for every compact $K \subset \bR^d$.
\end{theorem}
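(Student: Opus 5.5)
We argue by contradiction, using the compactness strategy described in the introduction. Suppose the conclusion fails. Then there are a compact $K$, a constant $\kappa>0$, a subsequence (not relabelled) and points $x_n\in K$ with
\begin{equation*}
  \bE\big[\max_{t\in[0,T]}|X_n(t,x_n)-X(t,x_n)|^2\big]\geq \kappa .
\end{equation*}
Passing to a further subsequence, we may assume $x_n\to x\in K$.

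\textbf{Step 1: uniform bounds.} We first derive bounds that are uniform in $n$ and in $x\in K$:
\begin{equation*}
  \sup_n\sup_{x\in K}\sup_{t}\bE[|X_n(t,x)|^p]<\infty .
\end{equation*}
This follows as in \cite[Lemma~4.1]{Bergerhausen2026}. We use the uniform linear growth \eqref{eq:lin_grow_bound}, the bound $W_\eta(\delta_0,\L(X_s))\le \bE[|X_s|^p]^{1/p}$, H\"older's inequality, the BDG inequality \eqref{eq:BDG}, the uniform kernel bounds of Remark~\ref{rem:asskernel_to_assshort} (with common constants $\gamma,\epsilon,L$), and Gr\"onwall's lemma. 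Next we bound increments:
\begin{equation*}
  \bE[|X_n(t',x)-X_n(t,x)|^p]\le C|t'-t|^{\gamma p}.
\end{equation*}
Here we split the Volterra integrals into the parts $\int_0^t (K(s,t')-K(s,t))\dots$ and $\int_t^{t'}K(s,t')\dots$. Each part is estimated with H\"older's inequality at exponents $1+\epsilon$ and $2+\epsilon$, which is where $p>2+4/\epsilon$ enters. Since $\gamma p>1$ (by assumption $p>1/\gamma$), Kolmogorov's criterion gives tightness in $C([0,T];\R^d)$ of $\L(X_n(\cdot,x_n))$ and of $\L(X(\cdot,x_n))$. The same bound also makes $\max_t|X_n-X|^2$ uniformly integrable. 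The identical estimate applies to the stochastic integral parts $Y_n:=\int_0^\cdot \sigma_n(s,X_n,\L(X_n))\dd B_s$ and $A_n:=\int_0^\cdot b_n\dd s$.

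\textbf{Step 2: Skorokhod representation and identification.} The tuple $(X_n(\cdot,x_n),X(\cdot,x_n),A_n,Y_n,B)$ is tight, so we use Skorokhod's theorem on a new probability space to get almost sure convergence of a subsequence to $(\bar X,\bar X',\bar A,\bar Y,\bar B)$. The laws $\L(X_n(s,x_n))$ converge in $W_\eta$, using the uniform $p$-th moments with $p>\eta$. They therefore stay in a compact subset of $\mathcal P_\eta(\R^d)$, uniformly in $s$. The local uniform convergence $b_n\to b$, $\sigma_n\to\sigma$ together with the uniform continuity in Assumption~\ref{ass:volatility}, localised on compacts and combined with moment bounds to control the tails, yields
\begin{equation*}
  b_n(s,X_n,\L(X_n))\to b(s,\bar X,\L(\bar X)) \quad\text{in } L^p(\Omega\times[0,T]),
\end{equation*}
and the same for $\sigma$. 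This identifies
\begin{equation*}
  \bar A=\int_0^\cdot b(s,\bar X_s,\L(\bar X_s))\dd s,\qquad \bar Y=\int_0^\cdot \sigma(\dots)\dd\bar B ,
\end{equation*}
the latter by the standard convergence of stochastic integrals along a.s. converging integrands and Brownian motions. The key step is passing to the limit in the Volterra representation
\begin{equation*}
  X_n(t)=x_n+\int_0^tK_{b,n}(s,t)\dd A_n(s)+\int_0^tK_{\sigma,n}(s,t)\dd Y_n(s).
\end{equation*}
For fixed $t$, we split
\begin{equation*}
  \int_0^t (K_{\sigma,n}-K_\sigma)(s,t)\sigma_n\dd B + \int_0^tK_\sigma(s,t)(\sigma_n\dd B-\sigma\dd\bar B).
\end{equation*}
The first term tends to zero in $L^2$ by BDG and H\"older with exponent $2+\epsilon$, using the assumed kernel convergence. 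The second term is a convergence result for stochastic integrals with a fixed $L^{2+\epsilon}$ kernel and converging integrators; this is exactly the appendix-type continuity statement, proven by approximating $K_\sigma(\cdot,t)$ in $L^{2+\epsilon}$ with step functions. Hence $\bar X$ is a strong $L^p$-solution of \eqref{eq:MVSVE_2} with initial value $x$, driven by $\bar B$. The same argument, with constant coefficients and kernels, shows that $\bar X'$ solves the same equation. Pathwise uniqueness then gives $\bar X=\bar X'$ almost surely.

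\textbf{Step 3: conclusion.} Because we passed to the joint law on a common probability space, we have
\begin{equation*}
  \bE[\max_t|X_n(t,x_n)-X(t,x_n)|^2]=\bar\bE[\max_t|\tilde X_n-\tilde X'_n|^2].
\end{equation*}
The right-hand side tends to $\bar\bE[\max_t|\bar X-\bar X'|^2]=0$ by uniform integrability, which contradicts the choice of $\kappa$. We expect the main obstacle to be Step~2. There are two difficulties: handling the measure argument, which requires convergence of $\L(X_n(s))$ in $W_\eta$ uniformly in $s$ via uniform integrability of $|X_n(s)|^\eta$, and passing to the limit in the singular Volterra stochastic integral when kernels and integrators are perturbed simultaneously. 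For the latter we would first try density of continuous kernels in $L^{2+\epsilon}$ together with the uniform BDG bounds.
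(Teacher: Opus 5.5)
Your argument is correct in outline, but it identifies the limit by a genuinely different route from the paper. The paper shows that $\hat{\bP}\circ(\hat A+\hat M)^{-1}$ solves the distribution-dependent Volterra local martingale problem for $(\delta_{x_0},b,\sigma,K_b,K_\sigma)$. For this it proves convergence of $\mathcal{M}^{f,l}$ via Lemma~\ref{lemma:convergence_integrals2}, uses stability of local martingales under weak limits \cite[Proposition~IX.1.17]{Jacod2003}, and transfers the Volterra representation to the Skorokhod space via \cite[Theorem~1.5]{Kurtz2014}. It then appeals to the equivalence with weak solutions \cite[Proposition~2.8]{Bergerhausen2026} before using pathwise uniqueness.

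You instead identify $\bar A$ and $\bar Y$ directly, using coefficient convergence and Skorokhod-type convergence of stochastic integrals with converging Brownian motions. You then pass to the limit in the Volterra representation by separating the kernel error from a fixed-kernel term, which you handle by step-function approximation; this needs only increments of $\tilde Y_n$ and uniform BDG--H\"older bounds.

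For this strong statement your route is more direct. It exhibits $\bar X$ and $\bar X'$ as solutions driven by the same $\bar B$ on one filtered space, which is exactly what pathwise uniqueness requires. The martingale-problem route only yields a weak solution with some Brownian motion, and the paper leaves implicit both that $\hat Y$ is driven by $\hat B$ and that the limit $\hat X$ of the comparison solutions solves the limit equation. Your route also avoids the paper's $\mathcal{H}^2$ argument for $\hat M^l\to\hat M$, which tacitly needs a common filtration. In return, the paper's route reuses machinery it needs anyway for Theorem~\ref{theorem:approximation_weak}. Your Step~3 via uniform integrability is also the correct justification for the interchange of limit and expectation that the paper attributes to Fatou's lemma.

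One point needs repair. The theorem assumes only $p\ge\eta$ and $p>2+4/\epsilon$, not $p>1/\gamma$ (nor $p>\eta$). So $p$-th moment bounds alone give you neither Kolmogorov's criterion, nor uniform integrability of $|X_n(s,x_n)|^\eta$, nor $L^p$-convergence of $b_n(s,\tilde X_n,\L(\tilde X_n))$. You need the bootstrap of Lemmas~\ref{lemma:pathcontinuity} and~\ref{lemma:boundedness2}: first path continuity of $X_n$, then localisation with the hitting times $\tau_k$ and Gr\"onwall. This gives $\sup_n\sup_{x\in K}\sup_t\bE[|X_n(t,x)|^q]<\infty$ for every $q$, which is possible because the initial values are deterministic. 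With $q$ large, all your estimates go through.

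Also put the stochastic-integral parts $\int_0^\cdot\sigma(s,X(s,x_n),\L(X(s,x_n)))\dd B_s$ of the comparison solutions into your tight tuple, or apply Skorokhod's lemma directly to the fixed-$t$ integrands. Otherwise \emph{the same argument} does not literally apply to $\bar X'$.
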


Note that the assumption on the convergence of $K_{b,n}$ to $K_b$ and of $K_{\sigma,n}$ to $K_\sigma$, as postulated in Theorem~\ref{theorem:approximation}, can be seen as convergence in the classical Lebesgue spaces with mixed integrality $L^{(1+\epsilon,\infty)}(\Delta_T)$ and $L^{(2+\epsilon,\infty)}(\Delta_T)$, respectively.

\medskip

As preparation for the proof of Theorem~\ref{theorem:approximation}, we first establish three auxiliary lemmas, which provide uniform moment bounds for the approximating sequence of solutions and a uniform H\"older continuity estimate.

\begin{lemma}\label{lemma:pathcontinuity}
  Under the assumptions of Theorem~\ref{theorem:approximation} $X_n$ is $\bP$-a.s. continuous for all  $n \in \bN$.
\end{lemma}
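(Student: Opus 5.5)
The plan is to prove that $X_n(\cdot,x)$ has a continuous modification via the Kolmogorov--Chentsov criterion. This requires a moment bound of the form $\bE[|X_n(t',x)-X_n(t,x)|^p]\le C|t'-t|^{\gamma p}$ with $\gamma p>1$. To make sense of the statement, I interpret ``$X_n$ is continuous'' as ``$X_n$ admits a continuous version'', which is the standard meaning for Volterra processes defined pointwise in $t$.

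First, I fix $n$ and $x$ and record a uniform moment bound $\sup_{t\in[0,T]}\bE[|X_n(t,x)|^p]<\infty$. This follows from \cite[Lemma~4.1]{Bergerhausen2026}, since $X_n$ is an $L^p$-solution, the kernels satisfy Assumption~\ref{ass:kernel}, and the linear growth \eqref{eq:lin_grow_bound} holds. By the estimate \eqref{eq:Wasserstein_bound}, we have $W_\eta(\delta_0,\L(X_n(s,x)))\le \bE[|X_n(s,x)|^p]^{1/p}$, which is bounded uniformly in $s$. Hence $\sup_s\bE[|b_n(s,X_n,\L(X_n))|^p+|\sigma_n(s,X_n,\L(X_n))|^p]<\infty$.

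Second, for $0\le t\le t'\le T$, I split the increment $X_n(t')-X_n(t)$ into four terms:
\[
\int_0^t (K_{b,n}(s,t')-K_{b,n}(s,t))b_n\,\d s+\int_t^{t'}K_{b,n}(s,t')b_n\,\d s,
\]
together with the analogous two stochastic integrals. The drift terms are handled by Hölder's inequality with exponents $1+\epsilon$ and $(1+\epsilon)/\epsilon$. Using Assumption~\ref{ass:kernel}, this yields the bound $C|t'-t|^{\gamma p}\,\bE\bigl[\bigl(\int_0^T|b_n|^{(1+\epsilon)/\epsilon}\d s\bigr)^{p\epsilon/(1+\epsilon)}\bigr]$. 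This expectation is finite provided $p\ge (1+\epsilon)/\epsilon$, which holds because $p>2+4/\epsilon$; Jensen's inequality then bounds it by the uniform $p$-th moments.

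For the stochastic terms, I apply the BDG inequality \eqref{eq:BDG} with the second kernel argument frozen, where $t'$ is fixed. Then I use Hölder's inequality with exponents $(2+\epsilon)/2$ and $(2+\epsilon)/\epsilon$ on $\int |K|^2|\sigma_n|^2\d s$. This gives $C|t'-t|^{\gamma p}\,\bE\bigl[\bigl(\int_0^T|\sigma_n|^{2(2+\epsilon)/\epsilon}\d s\bigr)^{p\epsilon/(2(2+\epsilon))}\bigr]$. This is finite because $p\ge 2(2+\epsilon)/\epsilon=2+4/\epsilon$, which is exactly where the assumption $p>\delta$ enters.

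The main obstacle is the exponent condition $\gamma p>1$ needed for Kolmogorov. Theorem~\ref{theorem:approximation} only assumes $p>2+4/\epsilon$ and $p\ge\eta$, not $p>1/\gamma$. I would first check whether $X_0=x$ being deterministic lets us raise the moment order. Since the initial value is deterministic, the moment bound in the first step holds for every finite $p'$, provided the solution is in $L^{p'}$; it can be obtained by a standard Gronwall/Volterra-type argument using linear growth, where the law term is controlled by $\bE[|X_n|^\eta]^{1/\eta}$. Thus I would choose $p'>\max\{1/\gamma,2+4/\epsilon,\eta\}$. The $L^{p'}$-integrability of the given $L^p$-solution follows by a localization argument with stopping times, or equivalently by uniqueness together with the existence of an $L^{p'}$-solution. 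Running the increment estimate at order $p'$ gives $\bE[|X_n(t')-X_n(t)|^{p'}]\le C|t'-t|^{\gamma p'}$ with $\gamma p'>1$. Kolmogorov--Chentsov then yields a continuous (indeed Hölder) modification, which I identify with $X_n$.
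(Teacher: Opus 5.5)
Your main line (H{\"o}lder with the $1+\epsilon$ and $2+\epsilon$ kernel exponents, BDG with the second kernel argument frozen, then Kolmogorov) is the paper's route. The paper simply refers to the first part of the proof of \cite[Lemma~3.9]{Bergerhausen2026}, and your exponent bookkeeping is correct. You are also right that the hypotheses of Theorem~\ref{theorem:approximation} do not give $\gamma p>1$, a point the paper passes over.

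The gap is in your repair. Raising the moment order by ``a localization argument with stopping times'' needs times $\tau_k$ with $\tau_k\to T$ a.s.\ for which $\sup_{t}\bE[|X_n(t,x)|^{p'}\mathbbm{1}_{\{t\le\tau_k\}}]<\infty$ is known beforehand; otherwise the Gronwall inequality is vacuous. The localization the paper uses in Lemma~\ref{lemma:boundedness2} is $\tau_k=\inf\{t:|X_n(t,x)|\ge k\}\wedge T$. There, $\tau_k\to T$ comes exactly from the path continuity you are trying to prove. That is why the paper proves continuity first, and why its remark says continuity is what unlocks arbitrarily high moments. For a merely progressively measurable process, $\sup_t|X_n(t,x)|$ can be infinite and these times need not reach $T$. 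Your alternative is not available either. Only a unique $L^p$-solution is assumed, and $b_n,\sigma_n$ are merely measurable with linear growth (the continuity in Assumption~\ref{ass:volatility} concerns only $b,\sigma$). So nothing supplies an $L^{p'}$-solution to which uniqueness could be applied.

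You can repair the step by localizing with a quantity that is continuous for free: $\tau_k:=\inf\{t:\int_0^t|X_n(s,x)|^p\dd s\ge k\}\wedge T$. This is a stopping time with $\bP(\tau_k=T)\to 1$, since $X_n(\cdot,x)\in L^p(\Omega\times[0,T])$. Deterministically, $\int_0^{\tau_k}|\sigma_n(s,X_n(s,x),\L(X_n(s,x)))|^p\dd s\le C\bigl(1+k+\int_0^T\bE[|X_n(s,x)|^p]\dd s\bigr)$. Set $Y^k_t:=\int_0^tK_{\sigma,n}(s,t)\sigma_n(s,X_n(s,x),\L(X_n(s,x)))\mathbbm{1}_{\{s\le\tau_k\}}\dd B_s$. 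H{\"o}lder with $\tilde p=\frac{p}{p-2}$ (note $2\tilde p\le 2+\epsilon$ iff $p\ge 2+\frac{4}{\epsilon}$) and BDG then give $\bE[|Y^k_{t'}-Y^k_t|^r]\le C_k|t'-t|^{\gamma r}$ for every $r\ge 2$. Taking $r>1/\gamma$, Kolmogorov yields continuous versions of $Y^k$. For each $t$, $Y^k_t$ equals the full stochastic convolution a.s.\ on $\{\tau_k=T\}$. The drift part is even pathwise $\gamma$-H{\"o}lder, by H{\"o}lder's inequality with $\frac{p}{p-1}\le 1+\epsilon$. This gives the continuous version directly for any $p>2+\frac{4}{\epsilon}$, $p\ge\eta$, and the same localization provides the a priori finiteness your Gronwall bootstrap would need.
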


\begin{proof}
  H{\"o}lder continuity can be shown following the steps of the first part of the proof of \cite[Lemma~3.9]{Bergerhausen2026}.
\end{proof}

\begin{remark}
	One may replace the assumption on $p$ stated in Theorem~\ref{theorem:approximation} by requiring the solutions of \eqref{eq:MVSVE_2} and \eqref{eq:MVSVE_2app} to be continuous. Then one can skip Lemma~\ref{lemma:pathcontinuity} and directly move to Lemma~\ref{lemma:boundedness2} to obtain that the solutions are in $L^p$ for arbitrary large $p$.
\end{remark}

\begin{lemma}\label{lemma:boundedness2}
  Under the assumptions of Theorem~\ref{theorem:approximation} for every $q \in [1,\infty)$ and every compact set $K \subset \bR^d$ there is a $C_q >0$ such that $\bE[|X_n(t,x)|^q] \leq C_q$ for all $n \in \bN$, $t \in [0,T]$ and $x \in K$.
\end{lemma}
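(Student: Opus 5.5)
We prove the uniform moment bound by a Volterra-type Gr\"onwall argument applied to $\bE[|X_n(t,x)|^q]$. Two features make this work. First, the linear growth bound \eqref{eq:lin_grow_bound} holds uniformly in $n$. Second, all kernels satisfy Assumption~\ref{ass:kernel} with the same constants $(\gamma,\epsilon,L)$. It suffices to treat $q$ large, say $q>\max\{2+\frac{4}{\epsilon},\eta\}$; smaller $q$ then follow from Jensen's inequality.

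Fix $x\in K$ and $n\in\bN$. A priori, $X_n(\cdot,x)$ only lies in $L^p$. We therefore first localize with the stopping time $\tau_R:=\inf\{t\colon |X_n(t,x)|\geq R\}\wedge T$. By Lemma~\ref{lemma:pathcontinuity}, $\tau_R\to T$ as $R\to\infty$ almost surely. This localization is needed to know that the quantities we bound are finite.

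Alternatively, we may exploit that the law enters only through $W_\eta(\delta_0,\L(X_n(s,x)))\leq \bE[|X_n(s,x)|^p]^{1/\eta}$. This is finite and bounded in $s$ by the $L^p$-solution property, or by \cite[Lemma~4.1]{Bergerhausen2026}. It can be treated as a deterministic function $m_n(s)$, and the resulting equation is a standard SVE with linear growth. In fact, we will not need $m_n$ to be bounded uniformly in $n$ a priori, since it will be absorbed in the Gr\"onwall step.

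For the main estimate, take $q$-th moments in \eqref{eq:MVSVE_2app}.

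\emph{Drift term.} Apply H\"older's inequality with exponents $1+\epsilon$ and $\frac{1+\epsilon}{\epsilon}$. Since $q$ is large, this gives
\begin{equation*}
\Big|\int_0^t K_{b,n}(s,t)\,b_n\dd s\Big|^q \leq \Big(\int_0^t|K_{b,n}(s,t)|^{1+\epsilon}\dd s\Big)^{\frac{q}{1+\epsilon}} \Big(\int_0^t |b_n|^{\frac{1+\epsilon}{\epsilon}}\dd s\Big)^{\frac{q\epsilon}{1+\epsilon}},
\end{equation*}
and a further application of Jensen in $s$ yields a bound $C\int_0^t|b_n|^q\dd s$.

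\emph{Stochastic term.} Use the BDG bound \eqref{eq:BDG}, followed by H\"older with exponents $\frac{2+\epsilon}{2}$ and $\frac{2+\epsilon}{\epsilon}$. This gives $C\int_0^t \bE|\sigma_n|^{q}\dd s$, provided $q\geq 2\cdot\frac{2+\epsilon}{\epsilon}=2+\frac{4}{\epsilon}$.

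The constants depend only on $L$, $T$, $\epsilon$ and $q$, by Remark~\ref{rem:asskernel_to_assshort}. Now insert \eqref{eq:lin_grow_bound} together with $W_\eta(\delta_0,\L(X_n(s,x)))^q\leq \bE[|X_n(s,x)|^\eta]^{q/\eta}\leq \bE[|X_n(s,x)|^q]$ for $q\geq\eta$. This yields
\begin{equation*}
\bE[|X_n(t,x)|^q]\leq C\Big(1+|x|^q+\int_0^t \bE[|X_n(s,x)|^q]\dd s\Big),
\end{equation*}
with $C$ independent of $n$, $t$ and $x$. Gr\"onwall's lemma, together with $\sup_{x\in K}|x|<\infty$, then gives $\bE[|X_n(t,x)|^q]\leq C(1+\sup_K|x|^q)e^{CT}=:C_q$.

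The main obstacle is justifying that $\bE[|X_n(s,x)|^q]$ is finite, and measurable in $s$, before applying Gr\"onwall when $q>p$. Here I would do the computation for the stopped quantity $\bE[|X_n(t,x)|^q\mathbbm{1}_{\{t<\tau_R\}}]$, or more robustly for $\bE[\sup_{u\le t\wedge\tau_R}|X_n|^q]$. The difficulty is that $X_n(t)$ depends on integrands up to time $t$ with a kernel that is not adapted to stopping. The cleanest remedy is to freeze the law term $m_n$ as above and note that the integrand bounds $|b_n|,|\sigma_n|\leq L(1+|X_n|+m_n)$ are used only on $\{s<\tau_R\}$. For the stochastic integral up to time $t$, we work with $\int_0^{t} K_{\sigma,n}(s,t)\mathbbm{1}_{\{s<\tau_R\}}\sigma_n\dd B_s$, which is a legitimate local-martingale value at fixed $t$, and compare it with $X_n(t)$ on $\{t<\tau_R\}$. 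Gr\"onwall gives an $R$-independent bound, and Fatou's lemma as $R\to\infty$ concludes.
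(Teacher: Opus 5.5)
Your proposal is correct and follows essentially the same route as the paper: localize with hitting times, apply BDG and H\"older with the kernel bounds from Assumption~\ref{ass:kernel} that are uniform in $n$, insert the uniform linear growth together with $W_\eta(\delta_0,\L(X_n(s,x)))^q\le \bE[|X_n(s,x)|^q]$, apply Gr\"onwall, remove the localization, and obtain small $q$ from monotonicity of moments. Your extra step of freezing $m_n(s)$ is actually more careful than the paper, whose displayed bound treats the law contribution as if it were $\bE[|X_n(s,x)|^q\mathbbm{1}_{\{s\le\tau_k\}}]$ even though $\L(X_n(s,x))$ is unaffected by the stopping. That step first establishes $\sup_t\bE[|X_n(t,x)|^q]<\infty$ for fixed $n$ and only then absorbs the law term in an unlocalized Gr\"onwall estimate; note that it needs $m_n$ bounded in $s$, which you get from a preliminary run with $q=p$ or from the cited moment lemma, not merely from $X_n\in L^p(\Omega\times[0,T])$.
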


\begin{proof}
  Fix some $n \in \bN$ and $q\geq \eta$ sufficiently large such that $\tilde{q}:=\frac{q}{q-2}\leq 1+ \frac{\epsilon}{2}$, $q':= \frac{q}{q-1}\leq 1+ \epsilon$. We introduce the hitting times
  \begin{equation*}
    \tau_k := \inf_{t \in [0,T]}\{|X_n(t,x)| \geq k\} \wedge T,\quad\text{for } k \in \bN.
  \end{equation*}
  Note that $\tau_k \to T$ $\bP$-a.s. as $k \to \infty$, by the path continuity from Lemma~\ref{lemma:pathcontinuity}. By the D{\'e}but theorem (\cite[Chapter I, (4.15) Theorem]{Revuz1999}) the hitting times $\tau_k$, $k \in \bN$, are stopping times. Hence, we get that
  \begin{align*}
    &\bE[|X_n(t,x)|^q \mathbbm{1}_{\{t \leq \tau_k\}}]\\
    & \quad = \bE\Big[\Big|x + \int_0^t K_{b,n}(s,t) b_n \big(s,X_n(s,x),\L(X_n(s,x))\big) \dd s\\
    &  \quad \qquad + \int_0^t K_{\sigma,n}(s,t) \sigma_n \big(s,X_n(s,x),\L(X_n(s,x))\big) \dd B_s \Big|^q \mathbbm{1}_{\{t \leq \tau_k\}} \Big]\\
    & \quad  = \bE\Big[\Big|x \mathbbm{1}_{\{t \leq \tau_k\}} + \int_0^t K_{b,n}(s,t) b_n \big(s,X_n(s,x),\L(X_n(s,x))\big) \dd s \mathbbm{1}_{\{t \leq \tau_k\}}\\
    & \quad  \qquad + \int_0^t K_{\sigma,n}(s,t) \sigma_n \big(s,X_n(s,x),\L(X_n(s,x))\big) \dd B_s \mathbbm{1}_{\{t \leq \tau_k\}} \Big|^q\Big]\\
    & \quad  \leq C_q \Big(|x|^q + \bE\Big[\Big| \int_0^t K_{b,n}(s,t) b_n \big(s,X_n(s,x),\L(X_n(s,x))\big) \big) \mathbbm{1}_{\{s \leq \tau_k\}} \dd s\Big|^q\\
    &  \quad \qquad + \bE\Big[\Big|\int_0^t K_{\sigma,n}(s,t) \sigma_n \big(s,X_n(s,x),\L(X_n(s,x))\big) \mathbbm{1}_{\{s \leq \tau_k\}} \dd B_s\Big|^q\Big]\Big)\\
    &  \quad \leq C_q \Big(|x|^q + \bE\Big[\Big| \int_0^t K_{b,n}(s,t) b_n \big(s,X_n(s,x),\L(X_n(s,x))\big) \big) \mathbbm{1}_{\{s \leq \tau_k\}} \dd s\Big|^q\\
    &  \quad \qquad + \bE\Big[\Big(\int_0^t |K_{\sigma,n}(s,t) \sigma_n \big(s,X_n(s,x),\L(X_n(s,x))\big) \mathbbm{1}_{\{s \leq \tau_k\}}|^2 \dd s\Big)^{q/2}\Big]\Big)\\
    &  \quad \leq C_{q,T} \bigg(|x|^q \\
    & \quad  \qquad +\Big(\int_0^t |K_{b,n}(s,t)|^{q'} \dd s\Big)^\frac{q}{q'} \int_0^t \bE[| b_n \big(s,X_n(s,x),\L(X_n(s,x))\big)|^q \mathbbm{1}_{\{s \leq \tau_k\}}]\dd s\\
    &  \quad \qquad + \Big(\int_0^t |K_{\sigma,n}(s,t)|^{2 \tilde{q}} \dd s\Big)^\frac{q}{2 \tilde{q}} \int_0^t \bE[|\sigma_n \big(s,X_n(s,x),\L(X_n(s,x))\big)|^q \mathbbm{1}_{\{s \leq \tau_k\}}]\dd s\bigg),
  \end{align*}
  where we used the Burkholder--Davis--Gundy inequality (cf. Remark~\ref{rmk:BDG inequality}) in the penultimate inequality and the H{\"o}lder inequality in the last step.

  Recall that $q \geq \eta$ and note that
  \begin{equation*}
    W_\eta(\delta_0,\L(X^m(\kappa_m(s))))^q \leq \bE[|X^m(\kappa_m(s))|^\eta]^{\frac{q}{\eta}} \leq \bE[|X^m(\kappa_m(s))|^q].
  \end{equation*}
  Using the linear growth assumption in Assumption~\ref{ass:volatility}, the above fact and Assumption~\ref{ass:kernel}, we have
  \begin{align*}
    \bE[|X_n(t,x)|^q \mathbbm{1}_{\{t \leq \tau_k\}}] & \leq C_{q,T} \Big(|x|^q + C_{K,b,\sigma,L} \int_0^t 1+2 \bE[|X_n(s,x)|^q \mathbbm{1}_{\{s \leq \tau_k\}}] \dd s\Big)\\
    & \leq C_{q,T,b,\sigma,k,L} \Big(1+|x|^q + \int_0^t \bE[|X_n(s,x)|^q \mathbbm{1}_{\{s \leq \tau_k\}} \dd s \Big).
  \end{align*}
  Using that $t \mapsto \bE[|X_n(t,x)|^q \mathbbm{1}_{\{t \leq \tau_k\}}]$ is bounded, we can apply Gronwall's lemma (see e.g. \cite[Lemma~26.9]{Klenke2020}) to get
  \begin{equation*}
    \bE[|X_n(t,x)|^q \mathbbm{1}_{\{t \leq \tau_k\}}] \leq  C_{p,T,b,\sigma,k,L} (1+|x|^q).
  \end{equation*}
  Sending $k \to \infty$ and noting that $K$ is compact, we have the assertion. By the orderedness of $L^p$-spaces the general statement follows.
\end{proof}

For the following, we define the sequences $(A^n)_{n \in \bN}$ and $(M^n)_{n \in \bN}$ by
\begin{equation*}
  A_n(t,x) := \int_0^t b_n(s,X_n(s,x), \L(X_n(s,x)))\dd s , \quad t \in [0,T], \, n \in \bN,
\end{equation*}
and
\begin{equation*}
  M_n(t,x) := \int_0^t \sigma_n(s,X_n(s,x),\L(X_n(s,x))) \dd B_s, \quad t \in [0,T], \, n \in \bN.
\end{equation*}

\begin{lemma}\label{lem:tightness}
  Under the assumptions of Theorem~\ref{theorem:approximation} for all compact $K \subset \bR^d$ there is an $C > 0$ such that
  \begin{align*}
    \bE[|X_n(t,x)-X_n(t',x)|^p] & \leq C|t'-t|^{\gamma p},\\
    \bE[|M_n(t,x)-M_n(t',x)|^p] & \leq C|t'-t|^{\gamma p},\\
    \bE[|A_n(t,x)-A_n(t',x)|^p] & \leq C|t'-t|^{\gamma p},
  \end{align*}
  for all $n \in \bN$, $x \in K$, $(t,t')\in \Delta_T$, $p \geq 1$.
\end{lemma}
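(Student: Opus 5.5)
The proof rests on the uniform moment bounds of Lemma~\ref{lemma:boundedness2} and the kernel regularity from Assumption~\ref{ass:kernel}. It suffices to treat large $p$, namely $p$ with $p' := \frac{p}{p-1} \leq 1+\epsilon$ and $\tilde p := \frac{p}{p-2} \leq 1+\frac{\epsilon}{2}$. Smaller exponents then follow by Jensen's inequality, since $|t'-t|\le T$: for $r\le p$ we have $\bE[|Y|^r]\le \bE[|Y|^p]^{r/p}\le C|t'-t|^{\gamma r}$.

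\emph{Step 1: a uniform bound on the coefficients.} Using \eqref{eq:lin_grow_bound}, the bound $W_\eta(\delta_0,\L(X_n(s,x)))\le \bE[|X_n(s,x)|^\eta]^{1/\eta}$, and Lemma~\ref{lemma:boundedness2}, we get
\[
\sup_{n\in\bN}\ \sup_{x\in K}\ \sup_{s\in[0,T]} \bE\big[|b_n(s,X_n(s,x),\L(X_n(s,x)))|^p + |\sigma_n(s,X_n(s,x),\L(X_n(s,x)))|^p\big] =: C_0 < \infty .
\]

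\emph{Step 2: the semimartingale parts $A_n$ and $M_n$.} For $A_n$, Hölder's inequality gives
\[
\bE[|A_n(t',x)-A_n(t,x)|^p] \le |t'-t|^{p-1}\int_t^{t'} \bE[|b_n|^p]\dd s \le C_0 |t'-t|^{p} \le C|t'-t|^{\gamma p},
\]
since $\gamma\le 1/2$. For $M_n$, the Burkholder--Davis--Gundy inequality followed by Hölder gives
\[
\bE[|M_n(t',x)-M_n(t,x)|^p] \le C_p\,|t'-t|^{p/2-1}\int_t^{t'}\bE[|\sigma_n|^p]\dd s \le C|t'-t|^{p/2} \le C|t'-t|^{\gamma p}.
\]

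\emph{Step 3: the Volterra process $X_n$.} This is the main step. Write
\[
X_n(t',x)-X_n(t,x) = I_1 + I_2 + J_1 + J_2,
\]
where
\begin{align*}
I_1 &= \int_0^t \big(K_{b,n}(s,t')-K_{b,n}(s,t)\big)\, b_n \dd s, & I_2 &= \int_t^{t'} K_{b,n}(s,t')\, b_n \dd s,\\
J_1 &= \int_0^t \big(K_{\sigma,n}(s,t')-K_{\sigma,n}(s,t)\big)\, \sigma_n \dd B_s, & J_2 &= \int_t^{t'} K_{\sigma,n}(s,t')\, \sigma_n \dd B_s .
\end{align*}

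For the drift terms, Hölder's inequality with exponents $(p',p)$ gives
\[
\bE[|I_1|^p] \le \Big(\int_0^t |K_{b,n}(s,t')-K_{b,n}(s,t)|^{p'}\dd s\Big)^{p/p'} \int_0^T \bE[|b_n|^p]\dd s .
\]
Since $p'\le 1+\epsilon$, a further Hölder step on $[0,T]$ bounds the kernel integral by a constant times $\big(\int_0^t|K_{b,n}(s,t')-K_{b,n}(s,t)|^{1+\epsilon}\dd s\big)^{p'/(1+\epsilon)}$. Assumption~\ref{ass:kernel}, with constants uniform in $n$, then yields $C|t'-t|^{\gamma p}$. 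The term $I_2$ is handled the same way.

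For the stochastic terms, we apply \eqref{eq:BDG} with the second kernel argument frozen at $t'$. This is legitimate because, for fixed $t'$, the process $u\mapsto\int_0^u (K_{\sigma,n}(s,t')-K_{\sigma,n}(s,t)\mathbbm 1_{\{s\le t\}})\sigma_n\dd B_s$ is a local martingale; it is exactly the device of Remark~\ref{rmk:BDG inequality}. This produces $\bE[(\int |\Delta K|^2|\sigma_n|^2\dd s)^{p/2}]$. Hölder with exponents $(\tilde p, p/2)$ bounds this by
\[
\Big(\int |\Delta K|^{2\tilde p}\dd s\Big)^{p/(2\tilde p)} \int_0^T\bE[|\sigma_n|^p]\dd s .
\]
Since $2\tilde p\le 2+\epsilon$, the same reduction to the $(2+\epsilon)$-integrals in Assumption~\ref{ass:kernel} gives $C|t'-t|^{\gamma p}$. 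The term $J_2$ is handled the same way.

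\emph{Main obstacle.} The delicate point is keeping every constant uniform in $n$ and in $x\in K$. This works because all kernel pairs share the same $(\gamma,\epsilon,L)$, and $C_0$ from Step 1 is uniform by Lemma~\ref{lemma:boundedness2}. A second point is that the Hölder exponents must be matched so that only the $(1+\epsilon)$- and $(2+\epsilon)$-integrals from Assumption~\ref{ass:kernel} appear. This is precisely why $p$ is taken large first and smaller $p$ are recovered afterwards by Jensen.
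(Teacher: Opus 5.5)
Your proof is correct and takes essentially the same route as the paper, which only defers to the proof of \cite[Lemma~3.9]{Bergerhausen2026}. That route splits the increments into kernel-difference and new-interval parts and applies H\"older and the frozen-kernel Burkholder--Davis--Gundy inequality, with exponents chosen so that only the $(1+\epsilon)$- and $(2+\epsilon)$-integrals of Assumption~\ref{ass:kernel} appear; uniformity comes from the shared kernel constants and the moment bounds of Lemma~\ref{lemma:boundedness2}. Your reduction of small $p$ via Jensen and your cruder bounds for $A_n$ and $M_n$ (exponents $p$ and $p/2$, absorbed using $\gamma\le\frac12$) are also fine.
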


\begin{proof}
  Since the linear growth bounds in \eqref{eq:lin_grow_bound} are assumed to hold uniformly and the kernels are assumed to satisfy Assumption~\ref{ass:kernel} with the same constants, using the uniform bound in Lemma~\ref{lemma:boundedness2} the lemma can be proven following the exact same steps as in the proof of \cite[Lemma~3.9]{Bergerhausen2026}.
\end{proof}

With these results at hand we can turn to the proof of this section's main result:

\begin{proof}[Proof of Theorem~\ref{theorem:approximation}]
  Suppose, for the sake of contradiction, that the theorem is wrong. Then, there are $\delta > 0$, an increasing sequence $(n_k)_{k \in \bN} \subset \bN$, a compact set $K \subset \bR^d$ and a sequence $(x_k)_{k \in \bN} \subset K$ such that
  \begin{equation*}
    \inf_{k \in \bN} \bE\big[ \max_{t \in [0,T]} |X_{n_k}(t,x_k)-X(t,x_k)|^2\big] \geq \delta.
  \end{equation*}
  Without loss of generality we assume that $(x_k)_{k \in \bN}$ is a sequence converging to some $x_0 \in K$. Since $X_n(0,x)=x$, $X(0,x)=x$, $M_n(0,x)=A_n(0,x) = 0$ for all $n \in \bN$ and the initial values $x_k$, $k \in \bN$, are in $K$ and using Lemma~\ref{lem:tightness} we can apply the Kolmogorov tightness criterion (see e.g. \cite[Problem~2.4.11]{Karatzas1991}) to obtain tightness of
  \begin{equation*}
    \bP_{X(\cdot,x_k), X_{n_k}(\cdot, x_k), A_{n_k}(\cdot, x_k), M_{n_k}(\cdot, x_k), B}
  \end{equation*}
  and by Prokhorov's theorem \cite[Theorem~2.4.7]{Karatzas1991} and the Skorokhod representation theorem \cite[Theorem~11.7.2]{Dudley2002} there is a probability space $(\hat{\Omega}, \hat{\mathcal{F}}, \hat{\bP})$ with continuous stochastic processes $\hat{X}^l, \hat{Y}^l, \hat{B}^l, \hat{A}^l, \hat{M}^l, l \in \bN, \hat{X}, \hat{Y}, \hat{B}, \hat{A}, \hat{M}$ such that
  \begin{equation*}
    \big(\hat{X}^l, \hat{Y}^l, \hat{B}^l, \hat{A}^l, \hat{M}^l\big) \stackrel{\mathscr{D}}{\sim} \big(X(\cdot,x_{k_l}), X_{n_{k_l}}(\cdot, x_{k_l}),B,A_{n_{k_l}}(\cdot, x_{k_l}), M_{n_{k_l}}(\cdot, x_{k_l})\big), \qquad l \in \bN,
  \end{equation*}
  and
  \begin{equation*}
    (\hat{X}^l, \hat{Y}^l, \hat{B}^l, \hat{A}^l, \hat{M}^l) \to (\hat{X}, \hat{Y}, \hat{B}, \hat{A}, \hat{M})
  \end{equation*}
  in $C([0,T]; \bR^d \times \bR^d \times \bR^m \times \bR^d \times \bR^d)$ as $l \to \infty$, $\hat{\bP}$-a.s.

  Using Fatou's lemma, we have
  \begin{align}\label{eq:ass}
    \begin{split}
    \delta & \leq \liminf_{k \to \infty} \bE\big[ \max_{t \in [0,T]} | X_{n_k}(t,x_k)-X(t,x_k)|^2\big]\\
    & \leq \liminf_{l \to \infty} \bE_{\hat{\bP}} \big[\max_{t \in [0,T]} |\hat{Y}^l_t-\hat{X}^l_t|^2\big]\\
    & \leq \bE_{\hat{\bP}} \big[\limsup_{l \to \infty} \max_{t \in [0,T]} |\hat{Y}^l_t-\hat{X}^l_t|^2\big]\\
    & = \bE_{\hat{\bP}} \big[\max_{t \in [0,T]} |\hat{Y}_t-\hat{X}_t|^2\big].
    \end{split}
  \end{align}

  Consider the measure $P$ on $(C([0,T];\bR^d),\mathcal{B}(C([0,T];\bR^d)))$ defined by $P = \hat{\bP} \circ \hat{Z}^{-1}$ where $\hat{Z}=\hat{A}+\hat{M}$ together with the filtration $(\mathcal{G}_t)_{t \in [0,T]}$ being the right-continuous augmentation of the completion of $(\sigma(\hat{Z}_s, 0 \leq s \leq t))_{t \in [0,T]}$. We shall show that $P$ solves the distribution-dependent Volterra local martingale problem~\ref{def:martproblem_new} given $(\delta_{x_0},b,\sigma,K_b, K_\sigma)$. Then, by \cite[Proposition~2.8]{Bergerhausen2026} $\hat{Y}$ is a solution to the SDE~\eqref{eq:MVSVE_2} w.r.t. the Brownian motion $\hat{B}$. The pathwise uniqueness then yields $\hat{Y} \equiv \hat{X}$ in contradiction to \eqref{eq:ass}, hence the theorem is correct.

  It is clear by construction that $\hat{Z}$ is a semimartingale. It is left to show the two equations in (ii):

  We introduce the stochastic processes $(Z^l)_{l \in \bN}$ and $(\hat{Z}^l)_{l \in \bN}$ by $Z^l := A_{n_{k_l}}(\cdot,x_{k_l}) + M_{n_{k_l}}(\cdot,x_{k_l})$ and $\hat{Z}^l := \hat{A}^l + \hat{M}^l$.

  \eqref{eq:X_MP_new}: Since $(\hat{Y}^l, \hat{M}^l) \stackrel{\mathscr{D}}{\sim}(X_{n_{k_l}}(\cdot,x_{k_l}),M_{n_{k_l}}(\cdot,x_{k_l}))$ for every $l \in \bN$ and since we have pathwise uniqueness by assumption we may use the result \cite[Theorem~1.5]{Kurtz2014} and consider $\hat{Y}^l$ as the output of a distribution-dependent stochastic Volterra equation to the input $\hat{M}^l$, i.e.
  \begin{equation*}
    \hat{Y}^l_t=x_{k_l} + \int_0^t K_{b,n_{k_l}}(s,t) b_{n_{k_l}}(s, \hat{Y}^l_s, \L(\hat{Y}^l_s)) \dd s + \int_0^t K_{\sigma,n_{k_l}}(s,t) \dd \hat{M}^l_s, \quad t \in [0,T], ~ \hat{\bP}\text{-a.s.}
  \end{equation*}

  We know that $x_{k_l} \to x_0$ and that $\hat{Y}^l \to \hat{Y}$ $\hat{\bP}$-a.s. By Lemma~\ref{lemma:convergence_integrals2} we get
  \begin{equation*}
    \Big( \int_0^t K_{b,n_{k_l}}(s,t) \dd \hat{A}^l_s \Big)_{t \in [0,T]} \xrightarrow{\hat{\bP}} \Big( \int_0^t K_b (s,t) \dd \hat{A}_s \Big)_{t \in [0,T]}.
  \end{equation*}
  To obtain
  \begin{equation}\label{eq:P_conv_mart_part}
    \Big( \int_0^t K_{\sigma,n_{k_l}}(s,t) \dd \hat{M}^l_s \Big)_{t \in [0,T]} \xrightarrow{\hat{\bP}} \Big( \int_0^t K_\sigma (s,t) \dd \hat{M}_s \Big)_{t \in [0,T]}
  \end{equation}
  we decompose
  \begin{align}\label{eq:decomp}
    \begin{split}
  	&\int_0^t K_{\sigma,n_{k_l}}(s,t) \dd \hat{M}^l_s - K_\sigma(s,t) \dd \hat{M}_s\\
  	&\quad= \int_0^t K_{\sigma,n_{k_l}}(s,t) \dd (\hat{M}^l_s - \hat{M}_s) + \int_0^t (K_{\sigma,n_{k_l}}(s,t)-K_\sigma(s,t)) \dd \hat{M}_s.
  	\end{split}
  \end{align}
  One can show uniform integrability of $((\hat{M}^l)^2)_{l \in \bN}$ in a similar way we use to show uniform integrability of $((\hat{Y}^l)^\eta)_{l \in \bN}$ below. Then, by the Vitali convergence theorem, see \cite[Exercise~3.4.13]{Cinlar2011}, $(\hat{M}^l)_{l \in \bN}$ converges towards $\hat{M}$ in $\mathcal{H}^2$. By the It{\^o} Isometry, for any $t \in  [0,T]$, we have
  \begin{equation}
  	\bE_{\hat{\bP}} \Big[ \Big( \int_0^t K_{\sigma,n_{k_l}} (s,t) \dd (\hat{M}^l_s - \hat{M}_s) \Big) ^2 \Big] = \bE_{\hat{\bP}} \Big[ \int_0^t (K_{\sigma,n_{k_l}} (s,t))^2 \dd \langle \hat{M}^l - \hat{M} \rangle_s \Big].
  \end{equation}
  By the $\mathcal{H}^2$-convergence of $(\hat{M}^l)_{l \in \bN}$, we have $\langle \hat{M}^l-{M}\rangle_T \to 0$ and since $t \mapsto \langle \hat{M}^l - \hat{M}\rangle_t$ is increasing and the $K_{\sigma,n_{k_l}}$ satisfy Assumption~\ref{ass:kernel} uniformly (note also Remark~\ref{rem:asskernel_to_assshort}) we can conclude that
  \begin{equation*}
  	\bE_{\hat{\bP}} \Big[ \Big( \int_0^t K_{\sigma,n_{k_l}} (s,t) \dd (\hat{M}^l_s - \hat{M}_s) \Big) ^2 \Big] \to 0, \qquad t \in [0,T].
  \end{equation*}
  This implies
  \begin{equation}\label{eq:decomp_conv1}
    \int_0^t K_{\sigma,n_{k_l}}(s,t) \dd \hat{M}^l_s \xrightarrow{\hat{\bP}} \int_0^t K_{\sigma,n_{k_l}} (s,t) \dd \hat{M}_s \text{ for all }t \in [0,T].
  \end{equation}
  
  Let $p > \frac{4+2\varepsilon}{\varepsilon}$. Then, denoting $\tilde{p}= \frac{p}{p-2}$, by the Burkholder--Davis--Gundy inequality and the H{\"o}lder inequality we have
  \begin{align*}
  	&\bE_{\hat{\bP}} \Big[\Big(\int_0^t (K_{\sigma,n_{k_l}}(s,t)-K_\sigma(s,t)) \dd \hat{M}_s\Big)^p \Big]^\frac{1}{p}\\
  	&\quad\leq C_{p,t} \Big( \int_0^t |K_{\sigma,n_{k_l}}(s,t)-K_\sigma(s,t)|^{2 \tilde{p}} \dd s \Big)^\frac{1}{2 \tilde{p}} \bE_{\hat{\bP}} \Big[ \int_0^t | \sigma(s,\hat{Y}_s, \mathcal{L}(\hat{Y}_s))|^p \dd s \Big]^\frac{1}{p}\Big].
  \end{align*}
  Note that by assumption
  \begin{equation}
  	\int_0^t |K_{\sigma,n_{k_l}}(s,t)-K_\sigma(s,t)|^{2 \tilde{p}} \dd s \to 0 \text{ for all } t \in [0,T],
  \end{equation}
  i.e.
  \begin{equation}\label{eq:decomp_conv2}
  	\bE_{\hat{\bP}} \Big[\Big(\int_0^t (K_{\sigma,n_{k_l}}(s,t)-K_\sigma(s,t)) \dd \hat{M}_s\Big)^p \Big]^\frac{1}{p} \to 0 \text{ for all } t \in [0,T].
  \end{equation}
  By combining \eqref{eq:decomp}, \eqref{eq:decomp_conv1} and \eqref{eq:decomp_conv2} we have a pointwise limit in probability. Knowing that the sequence converges $\hat{\bP}$-a.s. against a continuous process, by the uniqueness of limits this process must pointwise equal above limit, hence also indistinguishable from above limit. We get
  \begin{equation*}
    \Big(  \int_0^t K_{\sigma,n_{k_l}}(s,t) \dd \hat{M}^l_s \Big)_{t \in [0,T]} \to \Big(  \int_0^t K_{\sigma}(s,t) \dd \hat{M}_s \Big)_{t \in [0,T]}
  \end{equation*}
  $\hat{\bP}$-a.s. in $C([0,T];\bR^d)$. Hence, \eqref{eq:X_MP_new} holds $\hat{\bP}$-a.s. for $(\hat{Y}, \hat{Z})$.

  \eqref{def:M_f_new}: For $k \in \bN$ and $f \in C^2_0(\bR^d)$, we define the stochastic processes $(\mathcal{M}^{f,l}_t)_{t \in [0,T]}$, $l \in \bN$, by
  \begin{equation*}
    \mathcal{M}^{f,l}_t:= f(\hat{Z}^l_t)-\int_0^t \mathcal{A}^{f,l}(s,\hat{Y}^l_s,\L(\hat{Y}^l_s),\hat{Z}^l_s)\dd s,\quad t\in [0,T],
  \end{equation*}
  where
  \begin{equation*}
    \mathcal{A}^{f,l}(t,x,\rho,z) = b_{n_{k_l}}(t,x,\rho)^\top \nabla f(z) + \frac{1}{2} \mathrm{Tr}((\sigma_{n_{k_l}} \sigma_{n_{k_l}}^\top)(t,x,\rho)H_f(z)).
  \end{equation*}
  Due to $(\hat{Y}^l, \hat{Z}^l) \stackrel{\mathscr{D}}{\sim}(X_{n_{k_l}}(\cdot, x_{k_l}),Z^{l})$ and since $\bP \circ (Z^l)^{-1}$ solves the distribution-dependent Volterra local martingale problem given $(\delta_{x_{k_l}},b_{n_{k_l}},\sigma_{n_{k_l}}, K_{b,n_{k_l}},K_{\sigma,n_{k_l}})$ on $(\Omega, \mathcal F, \bP)$ by construction (see \cite[Proposition~2.8]{Bergerhausen2026}), it follows that $(\mathcal{M}^{f,l}_t)_{t \in [0,T]}$ is a local martingale on the filtered probability space $(C([0,T];\bR^d),\mathcal{B}(C([0,T];\bR^d)), (\mathcal{G}_t)_{t\in[0,T]},P)$ for every $l \in \bN$.

  Next we show that $\lim_{l \to \infty} W_\eta(\L(\hat{Y}^l),\L(\hat{Y})) = 0$. By \cite[Theorem~5.5]{Carmona2018} for this to hold we need the weak convergence of $(\L(\hat{Y}^l))_{l \in \bN}$ towards $\L(\hat{Y})$, which is already implied by the above, and uniform integrability in the sense of
  \begin{equation*}
    \lim_{r \to \infty} \sup_{l \in \bN} \bE_{\hat{\bP}} \big[ \| \hat{Y}^l-y\|_\infty^\eta \mathbbm{1}_{\{\|\hat{Y}^l-y\|_\infty \geq r\}} \big]=0
  \end{equation*}
  for some $y \in C([0,T];\bR^d)$. For the latter we show that there is a uniform bound on $\bE_{\hat{\bP}} \big[\sup_{t \in [0,T]}|\hat{Y}^l_t|^\eta\big]$. We have
  \begin{align*}
    \bE_{\hat{\bP}} \Big[\sup_{t \in [0,T]}|\hat{Y}^l_t|^\eta\Big] & \leq C_\eta \Big(x_{k_l}^\eta + \bE_{\hat{\bP}} \Big[ \sup_{t \in [0,T]} |\hat{Y}^l_t-x_{k_l}|^\eta \Big] \Big)\\
    & \leq C_\eta \Big(x_{k_l}^\eta + \bE_{\hat{\bP}} \Big[ \sup_{0 \leq s,t \leq T} |\hat{Y}^l_t-\hat{Y}^l_s|^\eta \Big] \Big ).
  \end{align*}
  Recall that the initial values $x_{k_l}$ remain in a compactum. $\hat{Y}^l$ is $h$-H{\"o}lder for all $h \in (0,\gamma)$ by Lemma~\ref{lem:tightness}. Now choose some $\beta \in (0, \gamma-\frac{1}{p})$, $\frac{2}{\beta}< \alpha$ and obtain by the Garsia--Rodemich--Rumsey inequality (see \cite[Theorem~2.1.3]{Stroock1979}) that
  \begin{align*}
    &\sup_{0 \leq s,t \leq T} |\hat{Y}^l_t - \hat{Y}^l_s|\\
    &\quad \leq 8 \int_0^T \Bigg(4 \int_0^T \int_0^T \Big(\frac{|\hat{Y}^l_t-\hat{Y}^l_s|}{|t-s|^\beta}\Big)^\alpha \dd s \dd t\, u^{-2}\Bigg)^{\frac{1}{\alpha}} \dd u^\beta\\
    &\quad = 8 ~ 4^\frac{1}{\alpha} \Bigg( \int_0^T \frac{1}{u^\frac{2}{\alpha}} \dd u^\beta \Bigg) \Bigg(\int_0^T \int_0^T \Bigg(\frac{|\hat{Y}^l_t-\hat{Y}^l_s|}{|t-s|^\beta}\Bigg)^\alpha \dd s \dd t\Bigg)^\frac{1}{\alpha}\\
    &\quad = 8 ~ 4^\frac{1}{\alpha} \Bigg( \int_0^T \beta u^{\beta-1-\frac{2}{\alpha}} \dd u \Bigg) \Bigg(\int_0^T \int_0^T \Big(\frac{|\hat{Y}^l_t-\hat{Y}^l_s|}{|t-s|^\beta}\Big)^\alpha \dd s \dd t\Bigg)^\frac{1}{\alpha}.
  \end{align*}
  The former integral is finite since $\beta -1-\frac{2}{\alpha} >-1$. By Lemma~\ref{lem:tightness} there is a $C > 0$ such that for all $l \in \bN$
  \begin{equation*}
    \bE_{\hat{\bP}}\big[|\hat{Y}^l_t-\hat{Y}^l_s|^{\eta \alpha} \big] \leq C |t-s|^{\eta \alpha \gamma}, \qquad s,t \in [0,T].
  \end{equation*}
  Hence, by the Jensen inequality and the linearity of integrals, we have
  \begin{align*}
    \bE_{\hat{\bP}}\big[\sup_{0 \leq s,t \leq T} |\hat{Y}^l_t-\hat{Y}^l_s|^\eta\big] & \leq C \bE_{\hat{\bP}}\Big[\Big(\int_0^T \int_0^T \frac{|\hat{Y}^l_t-\hat{Y}^l_s|^\alpha}{|t-s|^{\alpha \beta}} \dd s \dd t\Big)^\frac{\eta}{\alpha}\Big]\\
    & \leq C T^{\frac{2(\eta-1)}{\alpha}}\bE_{\hat{\bP}}\Big[\Big(\int_0^T \int_0^T \frac{|\hat{Y}^l_t-\hat{Y}^l_s|^{\eta \alpha}}{|t-s|^{\eta \alpha \beta}} \dd s \dd t\Big)^\frac{1}{\alpha}\Big]\\
    & \leq C_{T,\eta,\alpha} \Big(\int_0^T \int_0^T \frac{\bE_{\hat{\bP}}[|\hat{Y}^l_t-\hat{Y}^l_s|^{\eta \alpha}]}{|t-s|^{\eta \alpha \beta}} \dd s \dd t\Big)^\frac{1}{\alpha}\\
    & \leq C_{T,\eta,\alpha} \Big(\int_0^T \int_0^T \frac{C|t-s|^{\eta \gamma \alpha}}{|t-s|^{\eta \alpha \beta}} \dd s \dd t\Big)^\frac{1}{\alpha}\\
    & \leq \tilde{C} \Big(\int_0^T \int_0^T |t-s|^{\eta \alpha(\gamma-\beta)} \dd s \dd t\Big)^\frac{1}{\alpha}.
  \end{align*}
  Note that since $\eta \alpha (\gamma - \beta) > 0$, the integral is finite. This shows that there is a $C>0$ s.t. for all $l \in \bN$ we have $\bE_{\hat{\bP}}\big[\sup_{t \in [0,T]} | \hat{Y}^l_t|^\eta \big] \leq C$ and the convergence of the Wasserstein distance of the laws follows.

  Now, Lemma~\ref{lemma:convergence_integrals2} implies that $\mathcal{M}^{f,l} \to \mathcal{M}^{f}$ weakly as $l \to \infty$. Then by \cite[Proposition~IX.1.17]{Jacod2003} the limiting process $(\mathcal{M}^f_t)_{t \in [0,T]}$ is a local $\hat{\bP}$-martingale.
\end{proof}

\section{Stability of solutions to the Volterra local martingale problem} \label{subsec:weak_results}

In this section, we analyse the stability of solutions to Volterra local martingale problems (see Definition~\ref{def:martproblem_new}) associated with the distribution-dependent stochastic Volterra equation~\eqref{eq:MFSVE} with respect to their coefficients. The following theorem is the main result of this section.

\begin{theorem}\label{theorem:approximation_weak}
  Suppose that
  \begin{itemize}
    \item Assumption~\ref{ass:volatility} holds,
    \item all pairs of kernels $(K_{b,n}, K_{\sigma,n})$ $n \in \bN$ and $(K_b,K_{\sigma})$ satisfy Assumption~\ref{ass:kernel} using the same constants $\gamma, \epsilon, L$,
    \item $\mu_{0,n} \in \mathcal{P}_{p}(\R^d)$ for all $n \in \bN$ holds for some $p > \max\{\frac{2 \eta + 1}{\gamma},2+\frac{4}{\varepsilon}\}$ with uniformly bounded $p$-moments, meaning $\sup_{n \in \bN} \int_{\bR^d} |x|^p \mu_{0,n}(\d x) < \infty$,
	\item $\mu_{0,n}$ converges to $\mu$ as $n \to \infty$,
	\item for any compact subsets $K \subset \bR^d$ and $\mathcal K \subset \mathcal P_\eta(\bR^d)$,
	\begin{equation*}
	  \lim_{n \to \infty} \sup_{t \in [0,T]} \sup_{x \in K} \sup_{\mu \in \mathcal K} |\sigma_n(t,x,\mu)-\sigma(t,x,\mu)|+|b_n(t,x,\mu)-b(t,x,\mu)|=0
	\end{equation*}
	Additionally
	\begin{equation*}
	  \sup_{t \in [0,T]} \int_0^t |K_{b,n}(s,t)-K_b(s,t)|^{1+\epsilon} \dd s \to 0 \text{ as } n \to \infty
	\end{equation*}
	and
	\begin{equation*}
	  \sup_{t \in [0,T]} \int_0^t |K_{\sigma,n}(s,t)-K_\sigma(s,t)|^{2+\epsilon} \dd s \to 0 \text{ as } n \to \infty.
	\end{equation*}
  \end{itemize}
  Furthermore, suppose that the distribution-dependent Volterra local martingale problem given $(\mu_{0,n},K_{b,n}, K_{\sigma,n}, b_n, \sigma_n)$, $n \in \bN$, has a solution $P_n \in \mathcal{P}_p(C([0,T];  \bR^d))$ for each $n \in \bN$ and that the distribution-dependent Volterra local martingale problem given $(\mu, K_{b}, K_{\sigma}, b, \sigma)$ has a unique (in law) solution $P$. Then, $P_n$ converges weakly to $P$ as $n \to \infty$.
\end{theorem}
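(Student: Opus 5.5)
The plan is the standard tightness-plus-identification argument, following the proof of Theorem~\ref{theorem:approximation} but on the level of laws. Since the limiting martingale problem is unique in law, it suffices to show two things: $(P_n)_{n\in\bN}$ is tight, and every weakly convergent subsequence has as its limit a solution of the distribution-dependent Volterra local martingale problem given $(\mu,K_b,K_\sigma,b,\sigma)$. Every subsequence then has a further subsequence converging to $P$, which gives $P_n\Rightarrow P$.

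\textbf{Step 1 (uniform bounds).} By \cite[Proposition~2.8]{Bergerhausen2026}, each $P_n$ is the law of the semimartingale $Z^n=A^n+M^n$ arising from a weak solution $X^n$ of the DDSVE with data $(\mu_{0,n},K_{b,n},K_{\sigma,n},b_n,\sigma_n)$. Here $A^n_t=\int_0^t b_n(s,X^n_s,\L(X^n_s))\dd s$ and $M^n_t=\int_0^t\sigma_n(s,X^n_s,\L(X^n_s))\dd B^n_s$. I would repeat the proof of Lemma~\ref{lemma:boundedness2}, with $|x|^q$ replaced by $\int|x|^p\mu_{0,n}(\d x)$, which is uniformly bounded by assumption. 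Together with the uniform linear growth \eqref{eq:lin_grow_bound} and the uniform kernel constants, this gives $\sup_n\sup_t\E[|X^n_t|^p]<\infty$. The argument of Lemma~\ref{lem:tightness} then yields $\E[|Y^n_t-Y^n_{t'}|^p]\le C|t-t'|^{\gamma p}$ for $Y^n\in\{X^n,A^n,M^n\}$. Because $\gamma p>2\eta+1>1$ and the initial laws $\mu_{0,n}$ converge, hence are tight, Kolmogorov's criterion gives tightness of the joint laws of $(X^n,A^n,M^n,B^n)$. In particular $(P_n)$ is tight.

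\textbf{Step 2 (Skorokhod and identification of $X$).} Take a convergent subsequence and apply the Skorokhod representation. This gives processes $(\hat X^l,\hat A^l,\hat M^l)\to(\hat X,\hat A,\hat M)$ almost surely in $C([0,T])$, with $\hat Z=\hat A+\hat M$ and $\hat Z_0\sim\mu$. Write $\hat X^l=\hat Z^l_0+\int_0^t K_{b,n_l}(s,t)\dd\hat A^l_s+\int_0^tK_{\sigma,n_l}(s,t)\dd\hat M^l_s$. I would pass to the limit exactly as in the proof of Theorem~\ref{theorem:approximation}, using Lemma~\ref{lemma:convergence_integrals2} for the drift part and the decomposition \eqref{eq:decomp} together with $\mathcal H^2$-convergence of $\hat M^l$ for the martingale part. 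The $\mathcal H^2$-convergence follows from the uniform $p$-moment bounds via Vitali. This gives \eqref{eq:X_MP_new} for $(\hat X,\hat Z)$. The $\eta$-Wasserstein convergence $\L(\hat X^l_s)\to\L(\hat X_s)$, uniformly in $s$, follows from weak convergence plus a uniform bound on $\E[\sup_t|\hat X^l_t|^{\eta+\kappa}]$ for some $\kappa>0$. That bound comes from the Garsia--Rodemich--Rumsey argument used before. The condition $p>(2\eta+1)/\gamma$ is what allows choosing the exponents so that this sup-moment estimate holds with exponent above $\eta$.

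\textbf{Step 3 (martingale property).} For $f\in C^2_0$, the processes $\mathcal M^{f,l}$ built from $(b_{n_l},\sigma_{n_l},\hat X^l,\hat Z^l)$ are local martingales. I would show $\mathcal M^{f,l}\to\mathcal M^f$, in probability uniformly in $t$. The coefficient error is split into two parts. The first is $b_{n_l}-b$ evaluated along $(\hat X^l_s,\L(\hat X^l_s))$. This part is small by locally uniform convergence, since $\{\L(\hat X^l_s)\}$ lies in a $W_\eta$-compact set and $\hat X^l$ stays in a compact set with high probability. The second is $b(s,\hat X^l_s,\L(\hat X^l_s))-b(s,\hat X_s,\L(\hat X_s))$, which is small by the uniform continuity in Assumption~\ref{ass:volatility}. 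Both parts are multiplied by $\nabla f$, which is bounded, so dominated convergence applies using the linear growth bound. The limit of local martingales is again a local martingale: I would localize with $\tau_R=\inf\{t:|\hat Z_t|+|\hat X_t|\ge R\}$, chosen at continuity levels, and apply \cite[Proposition~IX.1.17]{Jacod2003}. This is the main obstacle, since one needs uniform integrability of the stopped processes. Localization makes the $f$-terms bounded, and the integral terms are controlled by the $p$-moments from Step~1. Hence $\hat P\circ\hat Z^{-1}$ solves the limiting problem and equals $P$ by uniqueness in law.
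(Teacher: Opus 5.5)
Your proposal is correct and follows essentially the same route as the paper. Both arguments use uniform moment and H\"older bounds with Kolmogorov's criterion for tightness, the Skorokhod representation, and passage to the limit in the Volterra relation via Lemma~\ref{lemma:convergence_integrals2} and the decomposition of the martingale part. Both then obtain $W_\eta$-convergence of the marginals via Garsia--Rodemich--Rumsey, the local martingale property via Jacod--Shiryaev, and identify the limit by uniqueness in law, which the paper phrases as a contradiction argument. Your requirement of a moment of order $\eta+\kappa$ is what uniform integrability actually needs, and $p>(2\eta+1)/\gamma$ leaves room for it.

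One step you only assert: that the Skorokhod copies $\hat X^l$ still satisfy the Volterra relation driven by $(\hat A^l,\hat M^l)$. Stochastic integrals are not pathwise functionals, so this needs an argument. The paper supplies it in Lemma~\ref{lemma:app_space_change} by writing $X=\Phi(A,M)$ for a Borel map $\Phi$ built from simple-kernel approximations, so the relation transfers under equality in law.
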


\begin{remark}
  In the case of an ordinary stochastic Volterra equation, i.e. no dependency on the law, the initial condition can be relaxed to $\mu_0 \in \mathcal{P}_{p}(\R^d)$ for some $p > \max\{\frac{1}{\gamma},\frac{4+2\varepsilon}{\varepsilon}\}$.
\end{remark}

Denote with $P_n \in \mathcal{P}_p(C([0,T];\bR^d))$ a solution to the distribution-dependent Volterra local martingale problem associated with $(\mu_{0,n},K_{b,n}, K_{\sigma,n}, b_n, \sigma_n)$ in the sense of Definition~\ref{def:martproblem_new}. Likewise, let $P$ denote the solution to the distribution-dependent Volterra local martingale problem associated with $(\mu, K_{b}, K_{\sigma}, b, \sigma)$.

We work on the canonical path space
\begin{equation*}
  (\Omega,\mathcal F) := (C([0,T];\bR^d),\mathcal B(C([0,T];\bR^d))),
\end{equation*}
and denote by $Z(t)(\omega) := \omega(t)$, $t \in [0,T]$, $\omega \in \Omega$, the canonical coordinate process under $P_n$. For each $n \in \bN$ we consider the filtered probability space
\begin{equation*}
  (\Omega, \mathcal F, (\mathcal{F}^n_t)_{t \in [0,T]},P_n), \qquad \mathcal{F}^n_t := \mathcal G_{t+}^n,
\end{equation*}
where $(\mathcal G_t^n)_{t \in [0,T]}$ is the $P_n$-augmentation of the canonical filtration $(\sigma(Z_s \colon s \leq t))_{t \in [0,T]}$. By Definition~\ref{def:martproblem_new}, under $P_n$ the canonical process $Z$ is a continuous $d$-dimensional semimartingale with $Z_0 \sim \mu_{0,n}$. We introduce $Z_n=Z$, if $Z$ is the canonical process under $P_n$. This is for notational convenience especially in the formulation of the martingale problem. Then, there exists a decomposition
\begin{equation*}
  Z_n = A_n + M_n,
\end{equation*}
where $A_n = (A_n(t))_{t \in [0,T]}$ is $(\mathcal{F}^n_t)$-predictable of finite variation and $M_n=(M_n(t))_{t \in [0,T]}$ is a continuous $(\mathcal{F}^n_t)$-local martingale starting in $0$.

We denote by $X_n=(X_n(t))_{t \in [0,T]}$ the Volterra state process obtained from $(Z_n,A_n,M_n)$ via the transform \eqref{eq:X_MP_new} by using $K_{b,n}$ and $K_{\sigma,n}$:
\begin{equation}
  X_n(t) := Z_n(0) + \int_0^t K_{b,n}(s,t) \dd A_n(s) + \int_0^t K_{\sigma,n}(s,t) \dd M_n(s), \qquad t \in [0,T], \, P_n\text{-a.s.}
\end{equation}
With $\bE_n$ we denote the (conditional) expectation w.r.t. $P_n$.

Analogously, on $(\Omega, \mathcal F, (\mathcal{F}_t)_{t \in [0,T]},P)$ with $\mathcal{F}_t := \mathcal{G}_{t+}$ defined from the $P$-augmentation, we can write $Z(t)(\omega) := \omega(t)$ for the canonical process under $P$, choose a semimartingale decomposition $Z = A+M$ and define $X=(X(t))_{t \in [0,T]}$ by
\begin{equation}
  X(t) := Z(0) + \int_0^t K_{b}(s,t) \dd A(s) + \int_0^t K_{\sigma}(s,t) \dd M(s), \qquad t \in [0,T], \, P\text{-a.s.}
\end{equation}

Equivalently to the previous section we present three lemmas beforehand. By \cite[Lemma~2.3~(ii) and Proposition~2.8]{Bergerhausen2026} a solution to the martingale problem can be realized as a weak solution of the DDSVE, i.e. for each $n \in \mathbb{N}$ there is a Brownian motion $B_n$ on some possibly extended filtered probability space, such that
\begin{equation*}
  M_n(t) = \int_0^t \sigma_n(s,X_n(s),\mathcal{L}(X_n(s))) \dd B_n(s), \qquad t \in [0,T].
\end{equation*}
Hence, the proofs of Lemma~\ref{lemma:pathcontinuity}, \ref{lemma:boundedness2} and \ref{lem:tightness} directly translate to this setting:

\begin{lemma}\label{lemma:pathcontinuity_weak}
  Under the assumptions of Theorem~\ref{theorem:approximation_weak} $X_n$ is $\bP_n$-a.s. continuous for all  $n \in \bN$.
\end{lemma}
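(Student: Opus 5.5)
The plan is to transfer the continuity argument from the strong setting, using the weak-solution representation recalled just before the statement. By \cite[Lemma~2.3~(ii) and Proposition~2.8]{Bergerhausen2026}, on a possibly extended filtered probability space carrying a Brownian motion $B_n$ we have
\begin{equation*}
  X_n(t) = Z_n(0) + \int_0^t K_{b,n}(s,t)\, b_n(s,X_n(s),\L(X_n(s)))\dd s + \int_0^t K_{\sigma,n}(s,t)\, \sigma_n(s,X_n(s),\L(X_n(s)))\dd B_n(s).
\end{equation*}
This holds for all $t\in[0,T]$, almost surely. Thus $X_n$ is a weak $L^p$-solution of a DDSVE whose kernels satisfy Assumption~\ref{ass:kernel} and whose coefficients satisfy the linear growth bound \eqref{eq:lin_grow_bound}. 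The only difference from Lemma~\ref{lemma:pathcontinuity} is that the initial value is now random, $Z_n(0)\sim\mu_{0,n}$ with finite $p$-th moment, rather than deterministic. Since $Z_n(0)$ does not depend on $t$, it drops out of all increments $X_n(t')-X_n(t)$.

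First, I would establish a $p$-th moment bound. Because $P_n\in\mathcal{P}_p(C([0,T];\bR^d))$, the canonical process $Z_n$ has finite $p$-th moments. I would then use \cite[Lemma~4.1]{Bergerhausen2026}, or the stopping-time and Gr\"onwall argument of Lemma~\ref{lemma:boundedness2} with $|x|^q$ replaced by $\E_n[|Z_n(0)|^p]$, to obtain $\sup_{t\in[0,T]}\E_n[|X_n(t)|^p]<\infty$. In particular $W_\eta(\delta_0,\L(X_n(s)))$ is bounded, since $p\geq\eta$. The linear growth condition then gives $\sup_s\E_n[|b_n(s,X_n(s),\L(X_n(s)))|^p+|\sigma_n(s,X_n(s),\L(X_n(s)))|^p]<\infty$.

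Second, I would prove a Kolmogorov-type increment estimate. For $(t,t')\in\Delta_T$, split $X_n(t')-X_n(t)$ into four pieces. The first two are the drift with kernel difference $K_{b,n}(s,t')-K_{b,n}(s,t)$ on $[0,t]$ and the drift with $K_{b,n}(s,t')$ on $[t,t']$. The other two are the analogous stochastic integrals with $K_{\sigma,n}$. Apply H\"older's inequality with exponents $(1+\epsilon)$ and $(1+\epsilon)/\epsilon$ to the drift pieces. Apply the BDG inequality \eqref{eq:BDG} with the time parameter $t'$ frozen, followed by H\"older with exponents $(2+\epsilon)/2$ and $(2+\epsilon)/\epsilon$, to the stochastic pieces. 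Together with Assumption~\ref{ass:kernel} and the moment bound, this yields
\begin{equation*}
  \E_n[|X_n(t')-X_n(t)|^p]\leq C|t'-t|^{\gamma p}.
\end{equation*}
This is exactly the first part of the proof of \cite[Lemma~3.9]{Bergerhausen2026}. Since $p>\frac{2\eta+1}{\gamma}>\frac1\gamma$, we have $\gamma p>1$, so the Kolmogorov continuity theorem gives a continuous (indeed H\"older) modification. Because $X_n$ is defined via \eqref{eq:X_MP_new} only up to modification for each $t$, this suffices.

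The main obstacle is the H\"older step on the stochastic term. The exponent for $|\sigma_n|$ must be large enough that $p/2\cdot\frac{2+\epsilon}{\epsilon}$-type moments are controlled by $p$-th moments. This is where $p>2+\frac4\epsilon$ enters, since it guarantees $\frac{p}{p-2}\cdot 2\leq 2+\epsilon$. I would therefore check first that the exponent bookkeeping works with the available $p$.
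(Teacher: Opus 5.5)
Your proposal is correct and follows the paper's route: realize $P_n$ as a weak solution via \cite[Lemma~2.3~(ii) and Proposition~2.8]{Bergerhausen2026}, then run the Kolmogorov increment estimate from the first part of the proof of \cite[Lemma~3.9]{Bergerhausen2026}, with exactly the H\"older/BDG exponent bookkeeping you describe. One caveat: take the moment bound from \cite[Lemma~4.1]{Bergerhausen2026} and not from the stopping-time argument of Lemma~\ref{lemma:boundedness2}, because that argument needs $\tau_k\to T$, which is the path continuity you are trying to prove; note also that the increment estimate only needs $\int_0^T\E_n[|X_n(s)|^p]\dd s<\infty$.
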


\begin{lemma}\label{lemma:boundedness2_weak}
  Under the assumptions of Theorem~\ref{theorem:approximation_weak} for every $q \in [1,p]$ and every compact set $K \subset \bR^d$ there is a $C_q >0$ such that $\bE_n[|X_n(t)|^q] \leq C_q$ for all $n \in \bN$, $t \in [0,T]$.
\end{lemma}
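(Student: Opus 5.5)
We realise each $P_n$ as a weak solution and then repeat the stopping-time/Gr\"onwall argument from the proof of Lemma~\ref{lemma:boundedness2}. The one structural change is that the deterministic initial value $x$ becomes a random variable $X_n(0)\sim\mu_{0,n}$. By \cite[Lemma~2.3~(ii) and Proposition~2.8]{Bergerhausen2026}, on a possibly extended filtered probability space we have a Brownian motion $B_n$ such that
\begin{equation*}
  X_n(t)=X_n(0)+\int_0^t K_{b,n}(s,t)\, b_n(s,X_n(s),\L(X_n(s)))\dd s+\int_0^t K_{\sigma,n}(s,t)\,\sigma_n(s,X_n(s),\L(X_n(s)))\dd B_n(s).
\end{equation*}
Lemma~\ref{lemma:pathcontinuity_weak} gives path continuity of $X_n$. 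Hence the hitting times $\tau_k:=\inf\{t\colon |X_n(t)|\ge k\}\wedge T$ are stopping times with $\tau_k\to T$ almost surely.

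It suffices to treat $q=p$; the case $q\in[1,p]$ then follows from Jensen's (Lyapunov's) inequality. Since $p>2+\frac4\epsilon$, the exponents $q':=\frac{p}{p-1}\le 1+\epsilon$ and $\tilde q:=\frac{p}{p-2}\le 1+\frac\epsilon2$ are admissible. Following the chain of estimates in Lemma~\ref{lemma:boundedness2}, we expand $\bE_n[|X_n(t)|^p\mathbbm 1_{\{t\le\tau_k\}}]$ into three terms:
\begin{itemize}
  \item the initial term $C_p\,\bE_n[|X_n(0)|^p]=C_p\int|x|^p\mu_{0,n}(\d x)$;
  \item the drift term, bounded by H\"older's inequality with exponent $q'$;
  \item the stochastic-integral term, bounded via the BDG inequality \eqref{eq:BDG} followed by H\"older's inequality with exponent $2\tilde q$.
\end{itemize}
The kernel norms $\sup_t\int_0^t|K_{b,n}(s,t)|^{q'}\dd s$ and $\sup_t\int_0^t|K_{\sigma,n}(s,t)|^{2\tilde q}\dd s$ are bounded uniformly in $n$. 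This follows from Assumption~\ref{ass:kernel} with the common constants $(\gamma,\epsilon,L)$ and the monotonicity of $L^r$-norms on $[0,t]$, as in Remark~\ref{rem:asskernel_to_assshort}.

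The coefficients are controlled by the uniform linear growth bound \eqref{eq:lin_grow_bound} together with $W_\eta(\delta_0,\L(X_n(s)))^p\le \bE_n[|X_n(s)|^p]$, which holds because $p\ge\eta$. This measure term is the only point needing care. It involves the unstopped moment $\bE_n[|X_n(s)|^p]$, so the bound does not close under the localisation, which is also an issue in Lemma~\ref{lemma:boundedness2}. I would first establish the a priori finiteness $\sup_{s\le T}\bE_n[|X_n(s)|^p]<\infty$ for each fixed $n$, using the assumption $P_n\in\mathcal P_p(C([0,T];\bR^d))$. Then $\sup_s\bE_n[|X_n(s)|^p]$ is a finite constant for each fixed $n$, and we can run Gr\"onwall directly on the unstopped function $t\mapsto\bE_n[|X_n(t)|^p]$, with the stopped version serving only as justification.

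If the path-space moment only concerns $Z$ and not $X_n$, I would instead invoke \cite[Lemma~4.1]{Bergerhausen2026} for fixed $n$. Either way we obtain
\begin{equation*}
  \bE_n[|X_n(t)|^p]\le C\Big(1+\int|x|^p\mu_{0,n}(\d x)+\int_0^t\bE_n[|X_n(s)|^p]\dd s\Big),
\end{equation*}
where $C$ depends only on $p,T,L$ and the kernel constants, and not on $n$. Gr\"onwall's lemma together with the hypothesis $\sup_n\int|x|^p\mu_{0,n}(\d x)<\infty$ then yields $\bE_n[|X_n(t)|^p]\le C_p$ uniformly in $n$ and $t$. The compact set $K$ in the statement plays no role here, since the uniform $p$-moment bound on the initial laws replaces it.

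The main obstacle is making the Gr\"onwall step rigorous despite the law-dependence, that is, securing finiteness of the unlocalised moments for each fixed $n$ before passing $k\to\infty$. Once that is in place, all constants are $n$-independent by the uniformity built into Assumptions~\ref{ass:volatility} and~\ref{ass:kernel}.
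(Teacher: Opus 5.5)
Your proposal is correct and follows the paper's route. The paper's proof simply reruns the localisation/BDG/H\"older/Gr\"onwall argument of Lemma~\ref{lemma:boundedness2} with a $\mu_{0,n}$-distributed initial value and exponent at most $p$, with $\sup_n\int|x|^p\mu_{0,n}(\d x)<\infty$ replacing compactness of $K$. Your extra care about the Wasserstein term is well placed: it carries the unstopped moment, and the paper passes over this. However, of your two remedies only the appeal to \cite[Lemma~4.1]{Bergerhausen2026} closes the gap; this is the same a priori bound the paper itself uses in the proof of Theorem~\ref{theorem:stability_MFSVE}. The other remedy fails because $P_n\in\mathcal P_p(C([0,T];\bR^d))$ controls moments of $Z=A_n+M_n$, not of the Volterra transform $X_n$.
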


\begin{proof}
  The proof follows the steps of the proof of Lemma~\ref{lemma:boundedness2}. One only needs to put a $\mu_{0,n}$-distributed random variable in the place of the deterministic initial condition there and take $q$ not larger than the finite moments of this mentioned random variable.
\end{proof}

\begin{lemma}\label{lem:tightness_weak}
  Under the assumptions of Theorem~\ref{theorem:approximation_weak} for all compact $K \subset \bR^d$ there is an $C > 0$ such that
  \begin{align*}
    \bE_n[|X_n(t)-X_n(t')|^p] & \leq C|t'-t|^{\gamma p},\\
    \bE_n[|M_n(t)-M_n(t')|^p] & \leq C|t'-t|^{\gamma p},\\
    \bE_n[|A_n(t)-A_n(t')|^p] & \leq C|t'-t|^{\gamma p},
  \end{align*}
  for all $n \in \bN$, $x \in K$, $t,t' \in [0,T]$.
\end{lemma}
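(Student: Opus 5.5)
We realize each $P_n$ as a weak solution of the DDSVE, as noted before Lemma~\ref{lemma:pathcontinuity_weak}. By \cite[Lemma~2.3~(ii) and Proposition~2.8]{Bergerhausen2026}, on a possibly extended filtered probability space there is a Brownian motion $B_n$ such that
\begin{equation*}
  M_n(t)=\int_0^t\sigma_n(s,X_n(s),\L(X_n(s)))\dd B_n(s), \qquad A_n(t)=\int_0^t b_n(s,X_n(s),\L(X_n(s)))\dd s ,
\end{equation*}
and $X_n$ solves the DDSVE with kernels $(K_{b,n},K_{\sigma,n})$ and initial value $Z_n(0)\sim\mu_{0,n}$. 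All estimates are therefore statements about laws, and we may compute on this extended space. The three bounds are then proved one after another, as in the proof of \cite[Lemma~3.9]{Bergerhausen2026}, which Lemma~\ref{lem:tightness} already uses.

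\emph{Step 1 (the increments $M_n$ and $A_n$).} For $(t,t')\in\Delta_T$, the Burkholder--Davis--Gundy inequality and H\"older's inequality give
\begin{equation*}
  \bE_n[|M_n(t')-M_n(t)|^p]\le C_p\,|t'-t|^{\frac p2-1}\int_t^{t'}\bE_n[|\sigma_n(s,X_n(s),\L(X_n(s)))|^p]\dd s .
\end{equation*}
By the uniform linear growth \eqref{eq:lin_grow_bound}, together with $W_\eta(\delta_0,\L(X_n(s)))^p\le \bE_n[|X_n(s)|^p]$ (valid since $p\ge\eta$), the integrand is at most $C(1+\bE_n[|X_n(s)|^p])$. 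Lemma~\ref{lemma:boundedness2_weak} with $q=p$ bounds this uniformly in $n$ and $s$; this is where the uniform $p$-moment bound on $\mu_{0,n}$ enters. We obtain $C|t'-t|^{p/2}\le C'|t'-t|^{\gamma p}$, because $\gamma\le \frac12$ and $T<\infty$. The estimate for $A_n$ is the same, using H\"older with exponent $p$, and gives $C|t'-t|^{p}$.

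\emph{Step 2 (the increments $X_n$; main step).} We write
\begin{align*}
  X_n(t')-X_n(t)={}&\int_0^t (K_{b,n}(s,t')-K_{b,n}(s,t))\,\beta_n(s)\dd s+\int_t^{t'}K_{b,n}(s,t')\,\beta_n(s)\dd s\\
  &+\int_0^t (K_{\sigma,n}(s,t')-K_{\sigma,n}(s,t))\,\Sigma_n(s)\dd B_n(s)+\int_t^{t'}K_{\sigma,n}(s,t')\,\Sigma_n(s)\dd B_n(s),
\end{align*}
where $\beta_n(s)=b_n(s,X_n(s),\L(X_n(s)))$ and $\Sigma_n(s)=\sigma_n(s,X_n(s),\L(X_n(s)))$. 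For the two drift terms we apply H\"older's inequality in $s$ with exponents $1+\epsilon$ on the kernel and its conjugate on $\beta_n$. For the two stochastic terms we apply the BDG inequality \eqref{eq:BDG} with the second kernel argument frozen at $t'$ (and at $t$ for the difference), followed by H\"older with exponents $\frac{2+\epsilon}{2}$ on the squared kernel and $\frac{2+\epsilon}{\epsilon}$ on $|\Sigma_n|^2$. Assumption~\ref{ass:kernel}, with constants $(\gamma,\epsilon,L)$ uniform in $n$, turns the kernel factors into $L^{\cdot}|t'-t|^{\gamma p}$. The remaining moment factors are of the form $\bE_n\big[\big(\int_0^T|\Sigma_n|^{2(2+\epsilon)/\epsilon}\dd s\big)^{p\epsilon/(2(2+\epsilon))}\big]$. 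Since $p>2+\frac4\epsilon$ we have $p\ge \frac{2(2+\epsilon)}{\epsilon}$, so Jensen's inequality reduces these factors to $\int_0^T\bE_n[|\Sigma_n(s)|^p]\dd s$, which is bounded uniformly by Step 1.

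The main obstacle is exactly this exponent bookkeeping: the moment level demanded by H\"older/BDG must not exceed $p$, because Lemma~\ref{lemma:boundedness2_weak} only gives moments up to $p$, not all $q$ as in the strong setting. If the stated $p$ falls slightly short for the drift term, we would first take the $p$-th moment of the pathwise bound before applying H\"older to the measure $|K|\dd s$. That is, we use $\big|\int k\beta\big|^p\le\big(\int|k|\big)^{p-1}\int|k||\beta|^p$, which needs only $p$-th moments of $\beta_n$, and then bound $\big(\int|k|\big)^{p}$ by Assumption~\ref{ass:kernel}. Since the problem is Gaussian-free, the kernel-difference parts can be handled uniformly in this way as well. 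The parameter $x\in K$ in the statement plays no role here, since the initial law is $\mu_{0,n}$. The claim for $(t',t)$ with $t'<t$ follows by symmetry.
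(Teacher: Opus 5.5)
Your proposal is correct and follows essentially the paper's route. The paper likewise realizes $P_n$ as a weak solution via \cite[Lemma~2.3~(ii) and Proposition~2.8]{Bergerhausen2026} and then simply defers to the BDG--H\"older argument of \cite[Lemma~3.8]{Bergerhausen2026}, using the uniform linear growth, the uniform kernel constants and the uniform $p$-th moment bound; you carry out that argument explicitly. Your fallback for the drift is unnecessary: the drift H\"older step only needs $p\geq 1+\frac{1}{\epsilon}$, and this already follows from $p>2+\frac{4}{\epsilon}$.
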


\begin{proof}
  Since the linear growth bounds in \eqref{eq:lin_grow_bound} are assumed to hold uniformly and the kernels are assumed to satisfy Assumption~\ref{ass:kernel} with the same constants, using the uniform bound in Lemma~\ref{lemma:boundedness2} the lemma can be proven following the exact same steps as in the proof of \cite[Lemma~3.8]{Bergerhausen2026}.
\end{proof}

With the previous lemmata at hand we can prove the Theorem~\ref{theorem:approximation_weak}.

\begin{proof}[Proof of Theorem~\ref{theorem:approximation_weak}]
  We proceed with a proof by contradiction. To that end, we assume that $(Z_n)_{n \in \bN}$ does not converge weakly to $Z$ as $n \to \infty$. Then, there is a subsequence $(Z_{n_k})_{k \in \bN}$ that has no further subsequence that converges weakly to $Z$.

  We define for all $n \in \bN$
  \begin{align}
    X_n(t) &:= Z_n(0)+\int_0^t K_{b,n}(s,t)\dd A_n(s)+\int_0^t K_{\sigma,n}(s,t) \dd M_n(s), && t \in [0,T]. \label{eq:Xk_def}
  \end{align}
  By the assumption on the initial conditions stated in the theorem and using Lemma~\ref{lem:tightness_weak} we can apply the Kolmogorov tightness criterion (see e.g. \cite[Problem~2.4.11]{Karatzas1991}) to obtain tightness of $P_{A_{n_k}, M_{n_k}, X_{n_k}}$ and by Prokhorov's theorem \cite[Theorem~2.4.7]{Karatzas1991} there is a weakly convergent subsequence $P_{A_{n_{k_l}}, M_{n_{k_l}}, X_{n_{k_l}}}$ converging to some $P_{A, M, X}$. We shall show that $P_{A + M}$, i.e. the law of the process $Z=A+M$ on $C([0,T];\bR^d)$, solves the distribution-dependent Volterra local martingale problem given $(\mu, b, \sigma, K_b, K_\sigma)$.

  First we show (ii) of Definition~\ref{def:martproblem_new}: Let $(n_{k_l})_{l \in \bN}$ be the subsequence fixed above such that $P_{A_{n_{k_l}},M_{n_{k_l}},X_{n_{k_l}}} \rightarrow P_{A,M,X}$ weakly on $C([0,T];\R^d\times\R^d\times \R^d)$. By the Skorokhod representation theorem (see e.g. \cite[Theorem~11.7.2]{Dudley2002}), there exists a probability space $(\hat\Omega,\hat{\mathcal{F}},\hat P)$ and continuous processes $\hat A^l,\hat M^l,\hat X^l$, $l\in\mathbb N$, and $\hat A,\hat M,\hat X$ such that
  \begin{align*}
    (\hat A^l,\hat M^l,\hat X^l) &\stackrel{\mathscr{D}}{\sim} (A_{n_{k_l}},M_{n_{k_l}},X_{n_{k_l}}),\qquad l \in \bN,\\
    (\hat A,\hat M,\hat X) &\stackrel{\mathscr{D}}{\sim} (A,M,X),\\
    (\hat A^l,\hat M^l,\hat X^l) &\to (\hat A,\hat M,\hat X) \quad \text{ in } C([0,T];\bR^d \times \bR^d \times \bR^d) \text{ as } l \to \infty,\ \hat P \text{-a.s.}
  \end{align*}
  Define $\hat Z^l:=\hat A^l+\hat M^l$ and $\hat Z:=\hat A+\hat M$. Note that we have by \cite[Lemma~2.3]{Bergerhausen2026}
  \begin{align*}
    \hat A^l_t -\hat A^l_0 &= \int_0^t b(s,\hat{X}^l_s,\L(\hat{X}^l_s))\dd s,\\
    \hat M^l_t -\hat M^l_0 &= \int_0^t \sigma(s,\hat{X}^l_s,\L(\hat{X}^l_s))\dd \hat{B}^l_s,
  \end{align*}
  for all $l \in \bN$ and $t \in [0,T]$ and some $\hat{P}$-Brownian motions $\hat{B}^l$, $l \in \bN$. By Lemma~\ref{lemma:app_space_change}
  \begin{equation*}
    \hat{X}^l_t = \hat{Z}^l_0+ \int_0^t K_{b,n_{k_l}}(s,t) \dd \hat{A}^l_s + \int_0^t K_{\sigma,n_{k_l}}(s,t) \dd \hat{M}^l_s, \qquad t \in [0,T],
  \end{equation*}
  for all $l \in \bN$.

  We show that
  \begin{equation*}
    \hat{Z}^l_0+ \int_0^t K_{b,n_{k_l}}(s,t) \dd \hat{A}^l_s + \int_0^t K_{\sigma,n_{k_l}}(s,t) \dd \hat{M}^l_s
  \end{equation*}
  converges in probability to
  \begin{equation*}
    \hat{Z}_0+ \int_0^t K_{b}(s,t) \dd \hat{A}_s + \int_0^t K_{\sigma,n}(s,t) \dd \hat{M}_s
  \end{equation*}
  as $l \to \infty$, to conclude that
  \begin{equation}\label{eq:X_def}
    \hat X_t = \hat Z_0+\int_0^t K_b(s,t) \dd \hat A_s + \int_0^t K_\sigma(s,t) \dd \hat M_s,\qquad t\in[0,T], \,\hat{P}\text{-a.s.}
  \end{equation}

  By Lemma~\ref{lemma:convergence_integrals2},
  \begin{equation}\label{eq:FV_conv}
    \Big(\int_0^t K_{b,n_{k_l}}(s,t) \dd \hat A^l_s \Big)_{t \in [0,T]} \xrightarrow{\hat P} \Big(\int_0^t K_b(s,t)\dd \hat A_s \Big)_{t \in [0,T]}
  \end{equation}
  in $(C([0,T];\R^d),\|\cdot\|_\infty)$.

  For the martingale part, decompose for each fixed $t \in [0,T]$,
  \begin{align}\label{eq:mart_decomp}
    \begin{split}
	&\int_0^t K_{\sigma,n_{k_l}}(s,t)\dd \hat M^l_s - \int_0^t K_\sigma(s,t) \dd \hat M_s \\
	&\quad = \int_0^t K_{\sigma,n_{k_l}}(s,t) \dd (\hat M^l_s-\hat M_s) + \int_0^t (K_{\sigma,n_{k_l}} - K_\sigma)(s,t) \dd \hat M_s.
	\end{split}
  \end{align}
  Uniform integrability follows as in the proof of Theorem~\ref{theorem:approximation}. Then, by using the fact that $\langle \hat M_l - \hat M\rangle_T \to 0$ in probability, we obtain $\hat M_l \to \hat M$ in $\mathcal H^2$ along a subsequence by the Vitali convergence theorem, see \cite[Exercise~3.4.13]{Cinlar2011}. Consequently, for each fixed $t \in [0,T]$,
  \begin{equation}\label{eq:first_mart_term}
    \hat\bE \Big[ \Big| \int_0^t K_{\sigma,n_{k_l}}(s,t)\dd (\hat M^l_s - \hat M_s) \Big|^2\Big] = \hat\bE \Big[ \int_0^t |K_{\sigma,n_{k_l}}(s,t)|^2 \dd \langle \hat M^l - \hat M \rangle_s \Big] \to 0,
  \end{equation}
  where we used It\^{o}'s Isometry and the uniform $L^2$-boundedness of $K_{\sigma,n_{k_l}}(\cdot,t)$ uniformly in $l$ (see Assumption~\ref{ass:kernel}).

  For the second term in \eqref{eq:mart_decomp}, the Burkholder--Davis--Gundy inequality yields for any fixed $t \in [0,T]$ with $\tilde p = \frac{p}{p-2}$,
  \begin{align}\label{eq:second_mart_term}
    \begin{split}
    &\hat\bE \Big[ \Big| \int_0^t (K_{\sigma,n_{k_l}}-K_\sigma)(s,t) \dd \hat M_s\Big|^p\Big]^{\frac{1}{p}} \\
    &\quad\leq C_{p,t}\Big( \int_0^t |K_{\sigma,n_{k_l}}(s,t) - K_\sigma(s,t)|^{2\tilde p}\dd s \Big)^{\frac{1}{2\tilde p}} \cdot \hat\bE \Big[ \int_0^t |\sigma(s,\hat X(s),\mathcal{L} (\hat X(s)))|^p \dd s\Big]^{\frac{1}{p}}.
    \end{split}
  \end{align}
  Since $\tilde{p} \leq 1 + \frac{\varepsilon}{2}$, the kernel convergence assumption implies that the first factor tends to $0$ as $l \to \infty$. The second factor is finite uniformly by the linear growth bound and Lemma~\ref{lemma:boundedness2_weak}. Therefore, the right-hand side tends to $0$.

  Combining \eqref{eq:FV_conv}, \eqref{eq:mart_decomp}-\eqref{eq:second_mart_term} and the convergence $\hat Z^l_0 \to\hat Z_0$, we obtain \eqref{eq:X_def} in probability in $C([0,T];\R^d)$.

  Fix some $f \in C_0^2(\bR^d)$ and define on $(\hat{\Omega},\hat{\mathcal{F}},\hat P)$ the processes
  \begin{align*}
    \hat{\mathcal{M}}^{f,l}_t &:= f(\hat Z^l_t) - \int_0^t \mathcal{A}^{f,l}(s,\hat X^l_s,\mathcal{L}(\hat X^l_s),\hat Z^l_s)\dd s,\\
    \mathcal{A}^{f,l}(t,x,\rho,z)&:= b_{n_{k_l}}(t,x,\rho)^\top \nabla f(z)+\frac{1}{2}\mathrm{Tr}\Big((\sigma_{n_{k_l}}\sigma_{n_{k_l}}^\top)(t,x,\rho)\,H_f(z)\Big),
  \end{align*}
  where $\mathrm{Tr}$ denotes the trace and $H_f$ the Hessian of $f$, and analogously
  \begin{equation*}
    \hat{\mathcal{M}}^{f}_t := f(\hat Z_t)-\int_0^t \mathcal{A}^{f}(s,\hat X_s,\mathcal{L} (\hat X_s),\hat Z_s) \dd s,
  \end{equation*}
  with $\mathcal{A}^f$ defined from $(b,\sigma)$ as in \eqref{eq:operator A}.

  As in the proof of Theorem~\ref{theorem:approximation}, using the Garsia--Rodemich--Rumsey inequality, one can show
  \begin{equation*}
    \lim_{r \to \infty} \sup_{l \in \bN} \hat{\bE} \big[ \| \hat{X}^l-y\|_\infty^\eta \mathbbm{1}_{\{\|\hat{X}^l-y\|_\infty \geq r\}} \big]=0.
  \end{equation*}
  By \cite[Theorem~5.5]{Carmona2018} this implies $W_\eta(\mathcal{L}(\hat X^l),\mathcal{L}(\hat X))\to 0$. Moreover, by the construction of the Skorokhod representation,
  \begin{equation*}
    ( \hat{X}^l, \hat{Z}^l) \to ( \hat{X}, \hat{Z}) \qquad C([0,T];\bR^d \times \bR^d),\, \hat{P}\text{-a.s.}
  \end{equation*}
  Since $b_{n_{k_l}} \to b$ and $\sigma_{n_{k_l}} \to \sigma$ locally uniformly and $f \in C^2_0(\bR^d)$, the functions
  \begin{equation*}
    \mathcal{A}^{f,l}(t,x,\rho,z):= b_{n_{k_l}}(t,x,\rho)^\top \nabla f(z) +\frac{1}{2} \text{Tr}\big( (\sigma_{n_{k_l}}\sigma_{n_{k_l}}^\top)(t,x,\rho) H_f (z)\big),
  \end{equation*}
  converge locally to
  \begin{equation*}
    \mathcal{A}^{f}(t,x,\rho,z):= b(t,x,\rho)^\top \nabla f(z) +\frac{1}{2} \text{Tr}\big( (\sigma \sigma^\top)(t,x,\rho) H_f (z)\big)
  \end{equation*}
  as $l \to \infty$. Furthermore, by the linear growth condition on $b_n$ and $\sigma_n$, $n \in \bN$, and the boundedness of $\nabla f$ and $H_f$, the family $\mathcal{A}^{f,l}$ satisfies a uniform quadratic growth bound. Therefore, by Lemma~\ref{lemma:convergence_integrals2} applied with $K \equiv 1$, we conclude that
  \begin{equation}\label{eq:drift_conv_weak}
    \int_0^\cdot \mathcal{A}^{f,l}(s,\hat X^l_s,\mathcal{L}(\hat X^l_s),\hat Z^l_s)\dd s \xrightarrow{\hat{P}} \int_0^\cdot \mathcal{A}^{f}(s,\hat X_s,\mathcal{L}(\hat X_s),\hat Z_s)\dd s
  \end{equation}
  in $C([0,T];\bR)$. Together with $\hat Z^l \to \hat Z$ in probability, this implies $\hat{\mathcal{M}}^{f,l} \to \hat{\mathcal{M}}^{f}$ weakly in $C([0,T];\bR)$.

  For each $l \in \bN$, since $P_{n_{k_l}}$ solves the martingale problem, $\hat{\mathcal{M}}^{f,l}$ is a local martingale (with respect to the right-continuous augmentation of the canonical filtration generated by $\hat Z^l$). By \cite[Proposition~IX.1.17]{Jacod2003}, \eqref{eq:drift_conv_weak} yields that $\hat{\mathcal{M}}^{f}$ is a local martingale with respect to the filtration generated by $\hat Z$.

  Therefore, the law $\hat P \circ \hat Z^{-1}$ solves (ii) of Definition~\ref{def:martproblem_new} for $(\mu_0,b,\sigma,K_b,K_\sigma)$. By uniqueness in law of solutions to the limiting martingale problem, we conclude that $\hat P \circ \hat Z^{-1} = P$. Hence, $P_{n_{k_l}} \to P$ as $l \to \infty$, which contradicts the initial assumption of $(Z_{n_k})_{k \in \bN}$ being a sequence that has no further subsequence that converges weakly to $Z$.

  Clearly (i) of Definition~\ref{def:martproblem_new} holds, as $\hat{Z}=\hat{A}+\hat{M}=\lim_{l\to\infty}\hat{A}^l+\hat{M}^l$ is a semimartingale and $\mathcal{L}(\hat{Z}^l_0)=\mu_{0,n_l} \to \mu$ as $l \to \infty$ by assumption.
\end{proof}

\appendix
\section{Auxiliary results on Volterra type integrals}\label{sec: appendix}

This appendix contains two auxiliary results regarding Volterra type integrals.

\begin{lemma}\label{lemma:convergence_integrals2}
  Let $f \colon [0,T] \times \bR^d \times \mathcal{P}_\eta(\bR^d) \to \bR^e$ be a function such that for all compact sets $\mathcal K \subset \bR^d$, $\bar{\mathcal K}\subset \mathcal{P}_\eta(\bR^d)$ and every $\epsilon >0$ there exists a $\delta >0$ such that
  \begin{equation*}
    |f(t,x,\mu)-f(t,y,\nu)|\leq \epsilon
  \end{equation*}
  for all $t \in [0,T]$, $x,y \in \mathcal K$ satisfying $|x-y| \leq \delta$, and $\mu, \nu \in \bar{\mathcal K}$ with $W_\eta(\mu,\nu)\leq \delta$. Let $(f_k)_{k \in \bN}$ be a sequence of functions such that $f_k \colon [0,T] \times \bR^d \times \mathcal{P}_\eta(\bR^d) \to \bR^e$ and
  \begin{equation*}
    |f(t,x,\mu)|+|f_k(t,x,\mu)|\leq C(1+|x|^2+W_\eta(\mu,\delta_0)^2), \quad x \in \bR^d, \, t \in [0,T], \,\mu \in \mathcal P_\eta(\bR^d),
  \end{equation*}
  for all $k \in \bN$ and for some $C>0$, and $f_k \to f$ locally uniformly.

  Let $K, K_k \colon \Delta_T \to \bR$, $k \in \bN$, be measurable functions and let $K$ be bounded in $L^1([0,T])$ uniformly in the second variable, i.e. $\sup_{t \in [0,T]} \int_0^t |K(s,t)| \dd s \leq M$ for some $M > 0$, and
  \begin{equation*}
    \sup_{t \in [0,T]} \int_0^t |K_k(s,t)-K(s,t)| \dd s \to 0 \quad\text{as } k \to \infty.
  \end{equation*}
  If $(X^k)_{k \in \bN}$ is a sequence of continuous stochastic processes such that $X^k \to X$ in $C([0,T]; \bR^d)$ as $k \to \infty$ $\bP$-a.s. and $(\L(X^k))_{k \in \bN} \subset \mathcal{P}_\eta(C([0,T];\bR^d))$, $\L(X)\in \mathcal{P}_\eta(C([0,T];\bR^d))$ such that $W_\eta(\L(X^k),\L(X)) \to 0$ as $k \to \infty$, then
  \begin{equation*}
    \int_0^{\cdot} K_k(s,\cdot) f_k(s,X^k_s,\L(X^k_s))\dd s \xrightarrow{\bP} \int_0^{\cdot} K(s,\cdot) f(s,X_s,\L(X_s))\dd s
  \end{equation*}
  with respect to $\|\cdot\|_\infty$ as $k \to \infty$.
\end{lemma}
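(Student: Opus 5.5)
We split the difference into two terms,
\begin{align*}
  &\int_0^t K_k(s,t) f_k(s,X^k_s,\L(X^k_s))\dd s - \int_0^t K(s,t) f(s,X_s,\L(X_s))\dd s\\
  &\quad= \int_0^t K_k(s,t)\big(f_k(s,X^k_s,\L(X^k_s))- f(s,X_s,\L(X_s))\big)\dd s + \int_0^t (K_k(s,t)-K(s,t)) f(s,X_s,\L(X_s))\dd s,
\end{align*}
and bound both uniformly in $t\in[0,T]$ by random quantities that tend to zero almost surely. This gives almost sure convergence in $\|\cdot\|_\infty$, which in particular yields convergence in probability.

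\emph{Second term.} Since $X$ is continuous, $\|X\|_\infty<\infty$ almost surely. Since $\L(X)\in\mathcal P_\eta(C([0,T];\bR^d))$, we have $\sup_s W_\eta(\L(X_s),\delta_0)\le \bE[\|X\|_\infty^\eta]^{1/\eta}<\infty$. The growth bound then gives $\sup_s|f(s,X_s,\L(X_s))|\le C(1+\|X\|_\infty^2+c)=:R(\omega)<\infty$. Therefore the second term is bounded by $R(\omega)\sup_t\int_0^t|K_k-K|(s,t)\dd s\to0$ almost surely.

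\emph{First term.} By assumption, $\sup_k\sup_t\int_0^t|K_k(s,t)|\dd s\le M+\sup_k\sup_t\int_0^t|K_k-K|\dd s=:M'<\infty$ after discarding finitely many $k$. It therefore suffices to show that $\sup_{s\in[0,T]}|f_k(s,X^k_s,\L(X^k_s))-f(s,X_s,\L(X_s))|\to0$ almost surely. We fix $\omega$ with $X^k(\omega)\to X(\omega)$ uniformly and split
\[
|f_k(s,X^k_s,\mu^k_s)-f(s,X^k_s,\mu^k_s)|+|f(s,X^k_s,\mu^k_s)-f(s,X_s,\mu_s)|,\qquad \mu^k_s:=\L(X^k_s),\ \mu_s:=\L(X_s).
\]
We need a compact set $\mathcal K\subset\bR^d$ and a compact set $\bar{\mathcal K}\subset\mathcal P_\eta(\bR^d)$ containing all the arguments. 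For $\mathcal K$ we take the closed ball of radius $\sup_k\|X^k(\omega)\|_\infty<\infty$. For $\bar{\mathcal K}$ we take $\{\mu^k_s,\mu_s : k\in\bN,s\in[0,T]\}$ and its closure.

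\emph{Main obstacle: compactness of $\bar{\mathcal K}$ and uniform convergence of the marginals.} The time marginal projection $\pi_s\colon C([0,T];\bR^d)\to\bR^d$ is $1$-Lipschitz, so $W_\eta(\mu^k_s,\mu_s)\le W_\eta(\L(X^k),\L(X))\to0$ uniformly in $s$. Moreover $s\mapsto\mu_s$ is $W_\eta$-continuous, since $W_\eta(\mu_s,\mu_r)\le\bE[|X_s-X_r|^\eta]^{1/\eta}\to0$ by dominated convergence with dominating function $2\|X\|_\infty$, which is $\eta$-integrable. Hence $\{\mu_s\}_s$ is compact as a continuous image of $[0,T]$. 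A set consisting of a compact set together with a sequence of points whose distance to it tends to zero uniformly is relatively compact in $(\mathcal P_\eta,W_\eta)$. To see this, take any sequence $\mu^{k_j}_{s_j}$: either $k_j$ stays bounded, so finitely many continuous curves are involved, or $k_j\to\infty$, and then we extract $s_j\to s$, so that $\mu^{k_j}_{s_j}\to\mu_s$. The closure $\bar{\mathcal K}$ is therefore compact.

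With these compact sets in hand, the first piece tends to zero uniformly in $s$ by the local uniform convergence $f_k\to f$. The second piece tends to zero by the assumed uniform continuity of $f$ on $\mathcal K\times\bar{\mathcal K}$, uniformly in $t$, applied with $|X^k_s-X_s|\le\|X^k-X\|_\infty\to0$ and $W_\eta(\mu^k_s,\mu_s)\to0$ uniformly in $s$. Measurability of the integrands follows from the measurability of $f_k,f$ and of $s\mapsto\mu_s$, and the integrals are finite because of the uniform bounds above. Combining the two terms gives almost sure, hence in-probability, convergence in $\|\cdot\|_\infty$.
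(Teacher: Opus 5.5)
Your argument is correct. It follows a somewhat different route from the paper's, and handles one point more carefully.

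\textbf{The paper's route.} The paper argues in probability. It restricts to the event $\{\|X\|_\infty\vee\|X^k\|_\infty\le N\}$ and splits $G^k-G$ into three pieces: $(K_k-K)\,f_k(\cdot,X^k,\cdot)$, then $K\,(f_k-f)(\cdot,X^k,\cdot)$, then $K\,\big(f(\cdot,X^k,\cdot)-f(\cdot,X,\cdot)\big)$. It controls the last two pieces by $\sup|f_k-f|$ and by a modulus of continuity $g$ of $f$, both taken over $\{|x|\le N\}\times\{\mu\colon W_\eta(\delta_0,\mu)\le N\}$. That $W_\eta$-ball is not compact in $\mathcal{P}_\eta(\mathbb{R}^d)$. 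The local uniform convergence $f_k\to f$ and the continuity hypothesis on $f$ are only available on compact sets, so neither actually delivers those suprema, and the paper passes over this.

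\textbf{Your route.} You work pathwise. You pair the kernel difference with the limiting integrand $f(s,X_s,\mathcal{L}(X_s))$, and you use $\sup_{k\ge k_0}\sup_t\int_0^t|K_k(s,t)|\dd s<\infty$ for the remaining term. Crucially, you build a deterministic compact set $\bar{\mathcal K}$ as the closure of the marginal laws $\{\mathcal{L}(X^k_s),\mathcal{L}(X_s)\}$. Compactness comes from $\sup_s W_\eta(\mathcal{L}(X^k_s),\mathcal{L}(X_s))\le W_\eta(\mathcal{L}(X^k),\mathcal{L}(X))$ together with the $W_\eta$-continuity of $s\mapsto\mathcal{L}(X^k_s)$ and $s\mapsto\mathcal{L}(X_s)$. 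This is exactly what makes the compact-set hypotheses applicable as stated. As by-products, you never use the growth bound on $f_k$, and you get almost sure rather than only in-probability convergence in $\|\cdot\|_\infty$.

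\textbf{What each buys.} The paper's truncation layout only needs $X^k\to X$ in probability. Your pathwise argument covers that case too, via the subsequence criterion. The paper's version would itself need your set $\bar{\mathcal K}$ in place of the $W_\eta$-ball to be rigorous.
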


\begin{proof}
  Let $\epsilon, \theta >0$ be arbitrary but fixed. Choose $N>W_\eta(\delta_0,\L(X))$ and $C_1 \in \bN$, such that
  \begin{equation*}
    \bP\Big(\|X\|_\infty \geq \frac{N}{2}\Big) \leq \frac{\theta}{4}
    \quad\text{and}\quad
    \bP\Big(\|X^k-X\|_\infty \geq \frac{N}{2}\Big) \leq \frac{\theta}{4}
  \end{equation*}
  for all $k \geq C_1$. Then, we observe that
  \begin{align*}
    \bP(\|X\|_\infty \vee \|X^k\|_\infty \geq N) & \leq \bP(\{\|X\|_\infty \geq N\} \cup \{\|X^k-X\|_\infty + \|X\|_\infty \geq N\})\\
    & \leq \bP\Big(\|X\|_\infty \geq \frac{N}{2}\Big) + \bP \Big(\|X^k-X\|_\infty \geq \frac{N}{2}\Big)\\
    & \leq \frac{\theta}{2}
  \end{align*}
  and for all $k \geq C_1$. By the triangle inequality
  \begin{equation*}
    W_\eta(\delta_0,\L(X^k_s)) \leq W_\eta(\delta_0,\L(X_s)) + W_\eta(\L(X_s),\L(X^k_s)),
  \end{equation*}
  by $W_\eta(\L(X^k),\L(X)) \to 0$ as $k \to \infty$ and $N>W_\eta(\delta_0,\L(X))$, there is a $C_2 \in \bN$ such that $W_\eta(\delta_0,\L(X^k_s)) \leq N$ for all $k \geq C_2$. Hence, for every $t \in [0,T]$ on $\{\|X\|_\infty \vee \|X^k\|_\infty \leq N\}$ we have
  \begin{align*}
    & |G^k_t - G_t|\\
    &\quad:= \Big| \int_0^t K_k(s,t)f_k(s,X^k_s, \L(X^k_s))\dd s- \int_0^t K(s,t) f(s,X_s, \L(X_s)) \dd s  \Big|\\
    &\quad \leq \Big|\int_0^t (K_k(s,t)-K(s,t)) f_k(s, X^k_s, \L(X^k_s)) \dd s \Big|\\
    &\quad \qquad + M \Big(\sup_{s \in [0,T]} \sup_{x \in [-N,N]} \sup_{\mu \colon W_\eta(\delta_0,\mu) \leq N} |f_k(s,x,\mu)-f(s,x,\mu)| \\
    &\quad \qquad \qquad + \sup_{s \in [0,T]} |f(s,X^k_s, \L(X^k_s))-f(s,X_s,\L(X_s))| \Big).
  \end{align*}
  By the growth condition we obtain
  \begin{equation*}
    \Big|\int_0^t (K_k(s,t)-K(s,t)) f_k(s,X^k_s, \L(X^k_s))\dd s \Big| \leq \int_0^t |K_k(s,t) - K(s,t)| \dd s ~ C(1+2 N^2).
  \end{equation*}
  Therefore, by the convergence of the sequence of kernels there is a $C_3 \in \bN$ such that
  \begin{equation*}
    \Big|\int_0^t (K_k(s,t)-K(s,t)) f_k(s,X^k_s, \L(X^k_s))\dd s\Big| \leq \frac{\epsilon}{3}, \quad\text{for all } k \geq C_2.
  \end{equation*}
  We choose $C_4 \in \bN$ sufficiently large such that
  \begin{equation*}
    \sup_{s \in [0,T]} \sup_{x \in [-N,N]} \sup_{\mu \colon W_\eta ( \delta_0,\mu) \leq N} |f_k(s,x,\mu)-f(s,x,\mu)| \leq \frac{\epsilon}{3M}.
  \end{equation*}
  Because of the continuity condition, there exists some continuous non-decreasing function $g \colon [0,\infty)^2 \to [0,\infty)$ with $g(0,0)=0$, such that for all $x,y\in [-N,N]$ we have
  \begin{equation*}
    |f(t,x,\mu)-f(t,y,\nu)| \leq g(|x-y|,W_\eta(\mu,\nu)), \quad t \in [0,T].
  \end{equation*}
  Then, we choose $C_5 \in \bN$ sufficiently large, such that, for all $k \geq C_5$,
  \begin{equation*}
    \bP\Big(g\big(\|X^k-X\|_\infty, W_\eta(\L(X),\L(X^k))\big) \geq \frac{\epsilon}{3M}\Big) \leq \frac{\theta}{2}.
  \end{equation*}
  Setting $\bar C := \max_i \{C_i\}$, we get
  \begin{align*}
    &\bP(\|G^k-G\|_\infty \geq \epsilon)\\
    &\quad \leq \bP(\{\|G^k-G\|_\infty \geq \epsilon\} \cap \{\|X\|_\infty \vee \|X^k\|_\infty \leq N\}) + \bP(\{\|X\|_\infty \vee \|X^k\|_\infty \geq N\})\\
    &\quad \leq \bP(\{g(\|X^k-X\|_\infty, W_\eta(\L(X),\L(X^k))) \geq \frac{\epsilon}{3M}\} \cap \{\|X\|_\infty \vee \|X^k\|_\infty \leq N\} ) \\
    &\quad \qquad + \bP(\{\|X\|_\infty \vee \|X^k\|_\infty \geq N)\\
    &\quad \leq \theta
  \end{align*}
  for all $k \geq \bar{C}$.
\end{proof}

\begin{lemma}\label{lemma:app_space_change}
  Let $T>0$ and let kernels $K_b$, $K_\sigma$ satisfy Assumption~\ref{ass:kernel}. Let $A$ and $M$ be continuous processes such that
  \begin{itemize}
    \item $A$ is a finite variation process satisfying $A_t = A_0 + \int_0^t \alpha_s \dd s$ with $\bE[\int_0^T |\alpha_s|^q \dd s]<\infty$ for some $q \geq 2 + \frac{4}{\epsilon}$, and
	\item $M$ is a continuous local martingale satisfying $M_t = \int_0^t \beta_s \dd B_s$ for some Brownian motion $B$ with $\bE[\int_0^T |\beta_s|^p \dd s]<\infty$
  \end{itemize}
  and define $X$ by
  \begin{equation*}
    X_t := A_0 + \int_0^t K_b(s,t) \dd A_s + \int_0^t K_\sigma(s,t) \dd M_s, \qquad t \in [0,T].
  \end{equation*}
  Then
  \begin{itemize}
	\item there exists a measurable map
	\begin{equation*}
      \Phi \colon C([0,T];\bR^d) \times C([0,T];\bR^d) \to C([0,T];\bR^d)
	\end{equation*}
	s.t. $X = \Phi(A,M)$ a.s.
	\item If $(\hat{A},\hat{M},\hat{X}) \stackrel{\mathscr{D}}{\sim} (A,M,X)$ in $C([0,T];\bR^d \times \bR^d \times \bR^d)$, then $\hat{X}= \Phi(\hat{A},\hat{M})$ a.s.
  \end{itemize}
\end{lemma}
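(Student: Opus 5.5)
The plan is to construct $\Phi$ as an almost sure limit of explicitly measurable maps built from Riemann--Stieltjes-type approximations of the two Volterra integrals. The second bullet will then follow from the fact that equality in law is preserved under measurable maps.

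\emph{Step 1: the drift part.} For continuous paths $a$, first approximate the kernel. Since $K_b(\cdot,t)\in L^{1+\epsilon}$ uniformly in $t$, choose a fixed sequence of bounded step kernels $K_b^{(j)}$ on $\Delta_T$. Take them piecewise constant on a dyadic grid in both variables, obtained for instance by averaging $K_b$ over dyadic squares. They should satisfy $\sup_t\int_0^t|K_b^{(j)}(s,t)-K_b(s,t)|^{1+\epsilon}\dd s\to0$ along a subsequence, or at least this convergence for a.e.\ $t$, upgraded to all $t$ via the H\"older estimates of Assumption~\ref{ass:kernel}.

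For step kernels, $\int_0^t K^{(j)}_b(s,t)\dd a_s$ is a finite linear combination of increments of $a$, hence a continuous and thus measurable functional of $a$. Define
\[
\Phi^{(j)}(a,m)_t:=a_0+\sum K_b^{(j)}\,\Delta a+\sum K_\sigma^{(j)}\,\Delta m,
\]
using the analogous step approximation $K_\sigma^{(j)}$ in $L^{2+\epsilon}$. Each $\Phi^{(j)}$ is measurable into $C([0,T];\bR^d)$ after continuous interpolation in $t$, or evaluated at rational $t$.

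\emph{Step 2: convergence.} Using $A_t=A_0+\int_0^t\alpha_s\dd s$, H\"older's inequality gives
\[
\Big|\int_0^t (K^{(j)}_b-K_b)(s,t)\alpha_s\dd s\Big|\le \|K^{(j)}_b-K_b\|_{1+\epsilon}\Big(\int_0^T|\alpha_s|^{q'}\dd s\Big)^{1/q'},
\]
and $q\ge 2+4/\epsilon$ makes the conjugate exponent admissible. For the martingale part, the BDG inequality \eqref{eq:BDG} combined with H\"older, exactly as in the proof of Theorem~\ref{theorem:approximation}, gives
\[
\E\Big[\Big|\int_0^t (K^{(j)}_\sigma-K_\sigma)(s,t)\dd M_s\Big|^p\Big]\le C\|K^{(j)}_\sigma-K_\sigma\|_{2\tilde p}^p\,\E\Big[\int_0^T|\beta_s|^p\dd s\Big]\to0.
\]
Hence $\Phi^{(j)}(A,M)_t\to X_t$ in probability for every $t$. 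Passing to a subsequence, the convergence is almost sure simultaneously for all rational $t$.

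Now define $\Phi(a,m)_t:=\limsup_j\Phi^{(j)}(a,m)_t$, componentwise, at rational $t$ where the limit exists, extended by continuity when the resulting function on the rationals is uniformly continuous, and $0$ otherwise. The set where the limit exists and extends continuously is Borel, so $\Phi$ is measurable. Continuity of $X$, which follows from Lemma~\ref{lemma:pathcontinuity} and the kernel regularity, then gives $X=\Phi(A,M)$ almost surely.

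\emph{Step 3: transfer.} If $(\hat A,\hat M,\hat X)\stackrel{\mathscr D}{\sim}(A,M,X)$, then the set $\{(a,m,x):x=\Phi(a,m)\}$ is Borel, being the preimage of the diagonal under the measurable map $(a,m,x)\mapsto(x,\Phi(a,m))$. It has full measure under the law of $(A,M,X)$, hence also under the law of $(\hat A,\hat M,\hat X)$. Therefore $\hat X=\Phi(\hat A,\hat M)$ almost surely.

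The main obstacle is Step 1/2 for the stochastic integral. Pathwise, $\int K_\sigma\dd m$ is not defined for general continuous $m$, so $\Phi$ exists only as an almost sure limit. The subsequence of $j$ must be chosen depending only on the law of $(A,M)$, which is harmless because Step 3 uses only that law. I would also first check that the kernel step approximations converge uniformly in $t$; if this proves delicate, I would work with rational $t$ only and use path continuity to conclude.
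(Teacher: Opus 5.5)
Your proposal is correct and follows essentially the paper's route: approximate $K_b,K_\sigma$ by step kernels so that the Volterra integrals become finite sums of increments (giving measurable maps $\Phi_n$), show $\Phi_n(A,M)\to X$ via H\"older and the BDG bound \eqref{eq:BDG}, define $\Phi$ as the limit where it exists, and transfer through the Borel graph $\{x=\Phi(a,m)\}$. You are in fact more careful than the paper's write-up on two points it glosses over: you extract an a.s.\ convergent subsequence, since convergence in probability alone does not make the full-sequence limit equal $X$ a.s., and you work at rational times, since step-kernel approximants jump in $t$. One correction: the continuity of $X$ you rely on should come from Assumption~\ref{ass:kernel} via a Kolmogorov argument (or be read as implicit in the statement), not from Lemma~\ref{lemma:pathcontinuity}, which concerns DDSVE solutions.
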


\begin{proof}
  By Assumption~\ref{ass:kernel} and Remark~\ref{rem:asskernel_to_assshort} there exist sequences $(K_b^n)_{n \in \bN}$ and $(K_\sigma^n)_{n \in \bN}$ of simple kernels $K_b^n, K_\sigma^n \colon [0,T]^2 \to \bR$, such that
  \begin{align}\label{eq:app_conv_K}
    \begin{split}
    \sup_{t \in [0,T]} \int_0^t |K^n_b(s,t)-K_b(s,t)|\dd s &\to 0,\\
	\sup_{t \in [0,T]} \int_0^t |K^n_\sigma(s,t)-K_\sigma(s,t)|^{2+\epsilon} \dd s &\to 0,
	\end{split}
  \end{align}
  as $n \to \infty$. For $(a,m) \in C([0,T]; \bR^d \times \bR^d)$ we define $\Phi_n(a,m)$ by
  \begin{equation*}
    \Phi_n(a,m)_t = a_0 + \int_0^t K_b^n(s,t) \dd a_s + \int_0^t K_\sigma^n(s,t) \dd m_s, \qquad t \in [0,T].
  \end{equation*}
  For each fixed $n \in \bN$ there are partitions $t_0=0 < t_1 < \ldots < t_m=T$ and $u_0=0 < u_1 < \ldots < u_m=T$ and $c_{ij} \in \bR$, $i,j \in \{0,\ldots,m-1\}$, such that
  \begin{equation*}
    K_\sigma^n(s,t) = \sum_{i,j=0}^{m-1} c_{ij} \mathbbm{1}_{(t_i,t_{i+1}]}(s) \mathbbm{1}_{(u_j,u_{j+1}]}(t), \qquad (s,t) \in \Delta_T.
  \end{equation*}
  Then
  \begin{equation*}
    \int_0^t K_\sigma^n(s,t) \dd m_s = \sum_{i,j=0}^{m-1} c_{ij} \mathbbm{1}_{(u_j,u_{j+1}]}(t) (m_{t \wedge t_{i+1}} - m_{t \wedge t_{i}})
  \end{equation*}
  and an equivalent relation holds for the finite variation part. Since the integrals reduce to finite sums of increments, $\Phi_n$ is measurable for all $n \in \bN$.

  We show
  \begin{equation}\label{eq:app_text}
    \Phi_n(A,M) \to X \text{ in } C([0,T];\bR^d) \text{ in probability.}
  \end{equation}
  The convergence of the FV part follows by the associativity of the Stieltjes integral and \eqref{eq:app_conv_K}. For the convergence of the martingale part one can mimic the steps below \eqref{eq:decomp_conv1}.

  Now define
  \begin{align*}
    \Phi(a,m) = \begin{cases}\lim_{n \to \infty} \Phi_n(a,m),& \text{if the limit exists in } C([0,T];\bR^d),\\
	0, &\text{else.}\end{cases}
  \end{align*}
  Since convergence in a Polish space (here $C([0,T];\bR^d)$) is a Borel property and each $\Phi_n$ is measurable, the map $\Phi$ itself is Borel measurable. By \eqref{eq:app_text}, for every admissible pair $(A,M)$, $\Phi(A,M)=X$ holds a.s. This finishes the proof of the first statement.

  Now define $G:=\{(a,m,x)\colon x= \Phi(a,m)\} \subseteq C([0,T];\bR^d \times \bR^d \times \bR^d)$. Since $\Phi$ is measurable, $G$ is a measurable set. By assumption $\bP((A,M,X) \in G)=1$. Then, by $(\hat{A},\hat{M},\hat{X}) \stackrel{\mathscr{D}}{\sim} (A,M,X)$ and assuming that $\hat{A},\hat{M},\hat{X}$ live on a probability space $(\hat{\Omega},\hat{\mathcal{F}},\hat{\bP})$, we finally have $\hat{\bP}((\hat{A},\hat{M},\hat{X}) \in G)=1$, i.e. $\hat{X}=\Phi(\hat{A},\hat{M})$ $\hat{\bP}$-a.s. This shows the second statement.
\end{proof}


\providecommand{\bysame}{\leavevmode\hbox to3em{\hrulefill}\thinspace}
\providecommand{\MR}{\relax\ifhmode\unskip\space\fi MR }
\providecommand{\MRhref}[2]{%
  \href{http://www.ams.org/mathscinet-getitem?mr=#1}{#2}
}
\providecommand{\href}[2]{#2}

\end{document}